\newtheorem{Theorem}[subsection]{Theorem}
\newtheorem{Corollary}[subsection]{Corollary}
\newtheorem{Lemma}[subsection]{Lemma}
\newtheorem{Proposition}[subsection]{Proposition}
\theoremstyle{remark}
\newtheorem*{Remark}{Remark}
\newcommand{\ul}{\underline}
\renewcommand{\to}[1][]{\xrightarrow{#1}}
\newcommand{\C}{{\mathbb{C}}}
\renewcommand{\gg}{{\mathfrak{g}}}
\renewcommand{\ll}{{\mathfrak{l}}}
\newcommand{\fsl}{{\mathfrak{sl}}}
\newcommand{\fn}{{\mathfrak{n}}}
\newcommand{\fg}{{\mathfrak{g}}}
\newcommand{\fh}{{\mathfrak{h}}}
\newcommand{\fb}{{\mathfrak{b}}}
\renewcommand{\l}{{\lambda}}
\newcommand{\BZ}{\mathbb{Z}}
\newcommand{\BC}{\mathbb{C}}
\newcommand{\BR}{\mathbb{R}}
\newcommand{\rr}{\mathbb{R}}
\newcommand{\cc}{\mathbb{C}}
\newcommand{\PP}{\mathbb{P}}
\newcommand{\A}{\mathcal{A}} 
\DeclareMathOperator{\can}{can} \DeclareMathOperator{\Op}{Op}
\DeclareMathOperator{\diag}{diag} 
 \DeclareMathOperator{\gr}{gr}
\DeclareMathOperator{\ad}{ad} \DeclareMathOperator{\Ad}{Ad}
 \DeclareMathOperator{\Lie}{Lie}
\DeclareMathOperator{\Hom}{Hom} 
 \DeclareMathOperator{\End}{End}
 \DeclareMathOperator{\Spec}{Spec}
 \DeclareMathOperator{\rk}{rk}
\renewcommand{\phi}{\varphi}
\newcommand{\la}{\lambda}
\def\@mult#1{\raise #1\rlap{$\cdot$}\lower #1\rlap{$\cdot$}\cdot}
\def\did{\mathrel{\@mult{3pt}}}
\def\openrow#1#2#3{\setbox0=\vbox{\hbox
    {\vrule height#2 width#3\kern#2\vrule height#2 width0pt}\hrule height#3}
    \hbox{\leaders\copy0\hskip#1\wd0\vrule width#3}}
\def\row#1#2#3{\vbox{\hrule height#3\openrow{#1}{#2}{#3}}}
\def\Yr#1{\row{#1}{1.5ex}{.1ex}}
\def\DY#1\endDY{\baselineskip=1ex\lineskip=0pt\lineskiplimit=0pt{\vcenter
    {\Yr#1}}}
\def\openclm#1#2#3{\setbox0=\vbox{\hrule height#3\hbox
    {\vrule width0pt\kern#2\vrule width#3 height#2}}\vtop
    {\leaders\copy0\vskip#1\ht0\hrule height#3}}
\def\clm#1#2#3{\hbox{\vrule width#3\openclm{#1}{#2}{#3}}}
\def\Yc#1{\clm{#1}{1.5ex}{.1ex}}
\def\CDY#1\endCDY{{\vcenter{\hbox{\Yc#1}}}}
\newcommand{\nc}{\newcommand}
\nc{\CB}{{\mathcal{B}}}
\nc{\g}{{\mathfrak g}}
\nc{\LG}{{}^L\neg G}
\nc{\pone}{{\mathbb P}^1}
\nc{\wt}{\widetilde}
\nc{\wh}{\widehat}
\nc{\ghat}{\wh{\gg}}
\nc{\mc}{\mathcal}
\nc{\su}{{\mathfrak s}{\mathfrak l}}
\nc{\ppart}{(\!(t)\!)}
\nc{\on}{\operatorname}
\nc{\sw}{{\mf s}{\mf l}}
\nc{\mf}{\mathfrak}
\nc{\ol}{\overline}
\nc{\Gr}{\on{Gr}}
\nc{\bi}{\bibitem}
\begin{document}

\title{A proof of the Gaudin Bethe Ansatz conjecture.}
\author{Leonid Rybnikov}
\email{leo.rybnikov@gmail.com}
\address{National Research University Higher School of Economics,
Department of Mathematics,
International Laboratory of Representation Theory and Mathematical Physics,
and Institute for Information Transmission Problems,
20 Myasnitskaya st,
Moscow 101000, Russia}

\begin{abstract}
Gaudin algebra is the commutative subalgebra in $U(\g)^{\otimes N}$ generated by higher integrals of the quantum Gaudin magnet chain attached to a semisimple Lie algebra $\g$. This algebra depends on a collection of pairwise distinct complex numbers $z_1,\ldots,z_N$. We prove that this subalgebra has a cyclic vector in the space of singular vectors of the tensor product of any finite-dimensional irreducible $\g$-modules, for all values of the parameters $z_1,\ldots,z_N$. We deduce from this result the Bethe Ansatz conjecture in the Feigin-Frenkel form which states that the joint eigenvalues of the higher Gaudin Hamiltonians on the tensor product of irreducible finite-dimensional $\g$-modules are in 1-1 correspondence with monodromy-free ${}^LG$-opers on the projective line with regular singularities at the points $z_1,\ldots,z_N,\infty$ and the prescribed residues at the singular points.
\end{abstract}

\maketitle

\section{Introduction}
\subsection{Gaudin model.} The Gaudin model was introduced in \cite{G1} as a spin model related
to the Lie algebra $\mathfrak{sl}_2$, and generalized to the case of arbitrary
semisimple Lie algebras in \cite{G}, 13.2.2. The generalized Gaudin
model has the following algebraic interpretation. Let $V_{\lambda}$
be an irreducible representation of a semisimple (reductive) Lie algebra $\gg$ with the highest weight
$\lambda$. For any collection of integral dominant weights
$\ul{\lambda}=\lambda_1,\dots,\lambda_n$, let
$V_{\ul{\l}}=V_{\l_1}\otimes\dots\otimes V_{\l_N}$. For any $x\in\gg$,
consider the operator $x^{(i)}=1\otimes\dots\otimes 1\otimes
x\otimes 1\otimes\dots\otimes 1$ ($x$ stands at the $i$th place),
acting on the space $V_{\ul{\l}}$. Let $\{x_a\},\ a=1,\dots,\dim\gg$,
be an orthonormal basis of $\gg$ with respect to Killing form, and
let $\ul{z}:=(z_1,\dots,z_N)$ be a collection of pairwise distinct complex numbers. The
Hamiltonians of
the Gaudin model are the following commuting operators
acting in the space $V_{\ul{\l}}$:
\begin{equation}\label{quadratic}
H_i=\sum\limits_{k\neq i}\sum\limits_{a=1}^{\dim\gg}
\frac{x_a^{(i)}x_a^{(k)}}{z_i-z_k}.
\end{equation}

We can treat the $H_i$ as elements of the universal enveloping
algebra $U(\gg)^{\otimes N}$. In \cite{FFR}, the existence of  a
large commutative subalgebra $\A(\ul{z})\subset
U(\gg)^{\otimes N}$
containing $H_i$ was proved with the help of the critical level phenomenon for the affine Lie algebra $\ghat$.
For $\gg=\mathfrak{sl}_2$, the algebra
$\A(\ul{z})$ is generated by $H_i$ and the central elements
of $U(\gg)^{\otimes N}$. In other cases, the algebra
$\A(\ul{z})$ has also some new generators known as higher
Gaudin Hamiltonians. We will call $\A(\ul{z})$ the
\emph{Gaudin algebra}.

\subsection{Bethe Ansatz.} The main problem in Gaudin model is the problem of simultaneous
diagonalization of (higher) Gaudin Hamiltonians. It follows from the
\cite{FFR} construction that all elements of
$\A(\ul{z})\subset U(\gg)^{\otimes N}$ are invariant with
respect to the diagonal action of $\gg$, and therefore it is
sufficient to diagonalize the algebra $\A(\ul{z})$ in the
subspace $V_{\ul{\l}}^{sing}\subset V_{\ul{\l}}$ of singular (highest) vectors
with respect to $\diag(\gg)$ (i.e., with respect to the diagonal
action of $\gg$). In many important cases, the Gaudin eigenproblem is solved by the \emph{algebraic Bethe Ansatz} method which provides an explicit (but complicated) construction of joint eigenvectors for $\A(\ul{z})$ in $V_{\ul{\l}}^{sing}$ from the solutions of some explicit systems of algebraic equations, called Bethe Ansatz equations, on some auxiliary variables. The famous Bethe Ansatz conjecture states that this method always works, i.e. gives an eigenbasis for $\A(\ul{z})$ in $V_{\ul{\l}}^{sing}$.
In particular, the conjecture says that, for generic $\ul{z}$, the algebra $\A(\ul{z})$ has simple spectrum in
$V_{\ul{\l}}^{sing}$ and there is a 1-1 correspondence between the eigenvectors and the solutions of the corresponding system of Bethe Ansatz equations. The latter was proved by Mukhin, Tarasov and Varchenko in \cite{MTV07} for $\fg=\mathfrak{sl}_n$. In particular, it is proved that the space $V_{\ul{\l}}^{sing}$ is always cyclic as $\A(\ul{z})$-module, and hence $\A(\ul{z})$ has simple spectrum whenever acts by semisimple operators. Some partial results for other classical types are obtained by Lu, Mukhin and Varchenko in \cite{LMV}.

\subsection{Opers and the eigenproblem for the Gaudin model.} In \cite{FFR}, the Bethe Ansatz equations were interpreted as a ``no-monodromy'' condition on certain space of {\em opers} on the projective line $\PP^1$. Namely, it was shown that ${\mc A}(\ul{z})$ is isomorphic to
the algebra of functions on the space of $^L
G$-opers on $\PP^1$ with regular singularities at the points $z_1,\dots,z_N$ and
$\infty$. Here $^L G$ is the {\em Langlands dual group} of $G$ ($^L G$ is taken to be
of adjoint type), and $^L G$-opers are connections on a principal $^L
G$-bundle over $\pone$ satisfying a certain transversality condition,
as defined in \cite{BD}. The appearance of the Langlands dual group is
not accidental, but is closely related to the geometric Langlands
correspondence, through a description of the center of the completed
enveloping algebra of the affine Kac--Moody algebra $\ghat$ at the
critical level in terms of $^L G$-opers on the punctured disc
\cite{FF,Fr2,F:book}.

Thus, we obtain that the spectra of ${\mc A}(\ul{z})$ on a tensor product of $\g$-modules
$M_1\otimes\ldots\otimes M_N$ are encoded by $^L G$-opers on $\pone$ satisfying the above
properties. Furthermore, in \cite{FFR} it was shown that if each
$M_i$ is $V_{\la_i}$, the irreducible finite-dimensional $\g$-module with
dominant integral highest weight $\la_i$, then these $^L G$-opers
satisfy two additional properties: they have fixed residues at
the points $z_i$, determined
by $\la_i$ (we denote the space of such opers by $\Op_{^L G}(\PP^1)_{\ul{z}}^{\ul{\l}}$, it is an affine space), and they have {\em trivial monodromy}. The latter is a finite number of polynomial conditions on the coefficients of opers which is generically equivalent to Bethe Ansatz equations.

\subsection{Bethe ansatz conjecture.} The conjecture of completeness of Bethe Ansatz was reformulated in \cite{FFR} as follows: there is a bijection
between the joint eigenvalues of ${\mc A}(\ul{z})$ on $V_{\ul{\l}}$ and the set of
monodromy-free opers from $\Op_{^L G}(\PP^1)_{\ul{z}}^{\ul{\l}}$. In this paper we prove this conjecture. In fact, we prove the following statement:

\medskip

\noindent {\bf Main Theorem.} {\em The space of singular vectors in the tensor product of $\gg$-modules $V_{\ul{\l}}^{sing}$ is cyclic as
an $\A(\ul{z})$-module. The annihilator of $V_{\ul{\l}}^{sing}$ in $\A(\ul{z})$ is generated
by the no-monodromy conditions on opers from $\Op_{^L G}(\PP^1)_{\ul{z}}^{\ul{\l}}$.}

\medskip

It was shown in \cite{spectra} that for real values of the parameters $z_1,\ldots,z_N$, the algebra $\A(\ul{z})$ acts on $V_{\l_1}\otimes\ldots\otimes V_{\l_N}$ by Hermitian operators and hence is diagonalizable. Main Theorem then implies that the spectrum of $\A(\ul{z})$ on $V_{\ul{\l}}^{sing}$ is simple. Since the property of having simple spectrum is an open condition on the parameters $z_1,\ldots,z_N$ we have the following

\medskip

\noindent {\bf Main Corollary.} {\em For generic $z_1,\ldots,z_N$ and any dominant integral $\l_1,\ldots,\l_N$, the Gaudin subalgebra $\A(z_1,\ldots,z_N)\subset U(\gg)^{\otimes N}$ is diagonalizable and
has simple spectrum on space of singular vectors in the tensor product irreducible $\g$-modules $V_{\l_1}\otimes\ldots\otimes V_{\l_N}$.
Moreover, its joint eigenvalues (and hence eigenvectors, up to a
scalar) are in one-to-one correspondence with monodromy-free opers
from $\Op_{^L G}(\PP^1)_{\ul{z}}^{\ul{\l}}$.}

\medskip

The main idea of our proof is to use the \emph{inhomogeneous} Gaudin algebra ${\mc
A}_\mu(\ul{z})$, also known as Gaudin algebra with irregular singularities (see \cite{FFTL} and also \cite{MTV08} for different approach). This algebra depends on an additional parameter $\mu\in\fg=\fg^*$ and for regular $\mu$ is a maximal commutative subalgebra in $U(\fg)^{\otimes N}$.  On the one hand, the analog of Bethe Ansatz conjecture for inhomogeneous Gaudin algebras turns to be easier than for homogeneous ones. Namely, according to the results of \cite{spectra}, for any regular $\mu\in\fg$ and any collection of pairwise distinct complex numbers $\ul{z}=(z_1,\ldots,z_N)$ the algebra ${\mc
A}_\mu(\ul{z})$ has a cyclic vector in any tensor product of irreducible finite-dimensional $\fg$-modules, and the annihilator is generated by the no-monodromy conditions.
On the other hand, in the case when $\mu = f$, a regular {\em nilpotent} element
of $\gg^* \simeq \gg$, the algebra ${\mc A}_f(\ul{z})$ has a decreasing filtration such that the $0$-degree component of the associated (negatively) graded algebra is ${\mc A}(\ul{z})$, and it is possible to check that passing to the associated graded does not destroy the cyclicity property.

\subsection{Covering of $\ol{M_{0,N+1}}$ and relation to crystals.}

The family $\A(\ul{z})$, as defined, is parameterized by a noncompact complex algebraic variety of configurations of pairwise distinct points on the complex line. On the other hand, every subalgebra is (in appropriate sense) a point of some Grassmann variety which is compact. Hence there is a family of commutative subalgebras which extends the family $\A(\ul{z})$ and is parameterized by some compact variety. In \cite{cactus}, we show that the closure of the family $\A(\ul{z})$ is parameterized by the Deligne-Mumford compactification $\overline{M_{0,N+1}}$ of the moduli space of stable rational curves with $N+1$ marked points. Moreover, we show that the natural topological operad structure on $\overline{M_{0,N+1}}$ is compatible with that on commutative subalgebras of $U(\fg)^{\otimes N}$ and describe explicitly the algebras corresponding to boundary points of $\overline{M_{0,N+1}}$. In section~\ref{sect-cactus} we deduce from the Main Theorem that the subalgebras corresponding to boundary points of $\overline{M_{0,N+1}}$ have a cyclic vector in $V_{\ul{\l}}^{sing}$ as well. We deduce from this the simple spectrum property for the subalgebras attached to \emph{all real points} of $\overline{M_{0,N+1}}$.

This allows us to regard the eigenbasis (or, more precisely, the set of $1$-dimensional eigenspaces) of $\A(\ul{z})$ in $V_{\ul{\l}}^{sing}$ as a \emph{covering} of the space $\overline{M_{0,N+1}}(\BR)$. Denote the fiber of this covering at a point $\ul{z}\in\overline{M_{0,N+1}}(\BR)$ by $B_{\ul{\l}}(\ul{z})^{sing}$. The fundamental group of $\overline{M_{0,N+1}}(\BR)$ (called \emph{pure cactus group} $PJ_N$) acts on this set. On the other hand, there is an action of the same group on the set $\CB_{\ul{\l}}^{sing}$ of highest elements in the tensor product  $\CB_{\l_1}\otimes\ldots\otimes\CB_{\l_N}$ of the $\fg$-crystals with highest weights $\l_1,\ldots,\l_N$ from the coboundary category formalism. Note that $\CB_{\ul{\l}}^{sing}$ has the same cardinality as $B_{\ul{\l}}(\ul{z})^{sing}$. In \cite{cactus} we have stated the following conjecture suggested by Pavel Etingof:

\medskip

\noindent {\bf Monodromy Conjecture} (Etingof) \emph{The actions of $PJ_N$ on $B_{\ul{\l}}(\ul{z})^{sing}$ and on $\CB_{\ul{\l}}^{sing}$ are isomorphic.}

\medskip

The contribution of the present paper to this conjecture is that now the covering $B_{\ul{\l}}(\ul{z})^{sing}$ is well defined for any $\fg$ and $\ul{\l}$.

\subsection{The paper is organized as follows.} In
section~\ref{sect-prelim} we collect basic facts on Gaudin models and
opers. In section~\ref{sect-results} we prove the main
result of the paper. In the last section~\ref{sect-cactus} we prove the cyclicity and simple spectrum property for limit Gaudin algebras (corresponding to boundary points of $\ol{M_{0,N+1}}$).

\subsection{Acknowledgements.} This work is a part of the joint project with Joel Kamnitzer on Bethe algebras and crystals. I am grateful to Pavel Etingof and Joel Kamnitzer for useful discussions. I also thank Evgeny Mukhin for comments.

The article was prepared within the framework of the Academic Fund Program at the National Research University Higher School of Economics (HSE) in 2015- 2016 (grant №15-01-0062)  and supported within the framework of a subsidy granted to the HSE by the Government of the Russian Federation for the implementation of the Global Competitiveness Program.

\section{Preliminaries}\label{sect-prelim}

\subsection{Gaudin algebras} The Hamiltonians of the Gaudin model are the commuting quadratic elements $H_i\in U(\fg)^{\otimes N}$ defined by (\ref{quadratic}). In \cite{FFR} Feigin, Frenkel and Reshetikhin described a large commutative subalgebra $\A(\ul{z})$ which contains these Hamiltonians as well as some higher ones.  The construction of \cite{FFR} uses the affine Kac--Moody algebra $\ghat$ which is the universal central extension
of $\g\ppart$. Let us briefly describe it.

Consider the infinite-dimensional ind-nilpotent Lie
algebra $\fg_-:=\fg\otimes t^{-1}\BC[t^{-1}]$ -- it is a "half" of
the corresponding affine Kac--Moody algebra $\hat\fg$. The
universal enveloping algebra $U(\fg_-)$ has a natural (PBW) filtration
by the degree with respect to the generators. The associated
graded algebra is the symmetric algebra $S(\fg_-)$ by the
Poincar\'e--Birkhoff--Witt theorem.

There is a natural grading on the associative algebras $S(\fg_-)$ and $U(\fg_-)$ determined by the derivation $L_0$ defined by
\begin{equation}\label{der2}
L_0(g\otimes t^{m})=mg\otimes t^{m}\quad\forall g\in\fg,
m=-1,-2,\dots
\end{equation}

There is also a derivation $L_{-1}$ of degree $-1$ with respect to this grading:
\begin{equation}\label{der3}
L_{-1}(g\otimes t^{m})=mg\otimes t^{m-1}\quad\forall g\in\fg,
m=-1,-2,\dots
\end{equation}

The algebra of invariants,  $S(\fg)^{\fg}$, is known to be a free commutative algebra with $\rk\fg$ generators. Let $\Phi_j,\ j=1,\dots,\ell=\rk\fg$
be some set of free generators of the algebra $S(\fg)^{\fg}$. The degrees of $\Phi_j$ are $d_j+1$ where $d_j$ are the exponents of $\g$.

Let $i_{-1}:S(\fg)\hookrightarrow S(\fg_-)$ be the embedding,
which maps $g\in\fg$ to $g\otimes t^{-1}$. The following result is due to Boris Feigin and Edward Frenkel, see \cite{Fr2} and references therein.

\begin{Theorem}\label{thm-feigin-frenkel}
There exist commuting elements $S_j\in U(\fg_-)$, homogeneous with respect to $L_0$, such that $\gr S_j=i_{-1}(\Phi_j)$. Moreover, the elements $L_{-1}^kS_j$ pairwise commute for all $k\in\BZ_+$ and $j=1,\dots,\ell$.
\end{Theorem}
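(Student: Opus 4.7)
The plan is to identify $U(\fg_-)$ with the vacuum module over $\ghat$ at the critical level and extract the $S_j$ from its space of $\fg[[t]]$-invariants, invoking the Feigin-Frenkel description of that space via $^L\fg$-opers on the formal disk.

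Let $V$ denote the vacuum module $\Ind_{\fg[[t]]\oplus\BC K}^{\ghat}\BC_{-h^\vee}$ at the critical level, where $h^\vee$ is the dual Coxeter number and the central element $K$ acts by $-h^\vee$. The Poincar\'e--Birkhoff--Witt theorem yields an isomorphism $U(\fg_-)\is V$, $u\mapsto u\cdot|0\rangle$, of left $\fg_-$-modules, under which the derivations $L_0$ and $L_{-1}$ on $U(\fg_-)$ from (\ref{der2})--(\ref{der3}) correspond to the grading operator and the translation operator of the vertex algebra structure on $V$. The subspace $\mathfrak{z}(\ghat):=V^{\fg[[t]]}$ then becomes a graded subalgebra of $U(\fg_-)$, and it is commutative: for any two invariant vectors $A,B$, the abelian character of the vertex subalgebra they generate forces $A_{(k)}B=0$ for $k\ges 0$, whence in particular $A\cdot B = A_{(-1)}B = B_{(-1)}A = B\cdot A$ in $U(\fg_-)$.

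The next step is to invoke the Feigin-Frenkel theorem (see \cite{Fr2}): the associated graded of $\mathfrak{z}(\ghat)$ with respect to the PBW filtration coincides with the Poisson-commutative subalgebra $S(\fg_-)^{\fg[[t]]}\subset S(\fg_-)$, which contains the classical generators $i_{-1}(\Phi_j)$. Lifting each $i_{-1}(\Phi_j)$ to a homogeneous element $S_j\in\mathfrak{z}(\ghat)\subset U(\fg_-)$ then produces the desired commuting family with $\gr S_j=i_{-1}(\Phi_j)$.

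For the final assertion, observe that $L_{-1}$ preserves the vertex subalgebra $\mathfrak{z}(\ghat)$, so $L_{-1}^k S_j \in \mathfrak{z}(\ghat)$ for every $k\in\BZ_+$ and every $j$; pairwise commutativity of the entire collection $\{L_{-1}^k S_j\}_{k,j}$ is then immediate from the commutativity of $\mathfrak{z}(\ghat)$ already established. The main conceptual obstacle is the Feigin-Frenkel identification of $\mathfrak{z}(\ghat)$ with functions on opers on the formal disk; once that is cited, all remaining steps are formal consequences of the vertex algebra structure on $V$.
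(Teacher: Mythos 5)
The paper does not actually prove this theorem: it is quoted from Feigin--Frenkel with the pointer ``see \cite{Fr2} and references therein,'' so there is no in-paper argument to compare against. Your sketch is the standard derivation from the structure of the center $\mathfrak{z}(\ghat)=V^{\fg[[t]]}$ of the critical-level vacuum module, and its logical content is ultimately the same as the paper's citation: all of the difficulty is concentrated in the step you also only cite, namely the Feigin--Frenkel theorem that $\gr\mathfrak{z}(\ghat)=S(\fg_-)^{\fg[[t]]}$ (equivalently, the oper description of the center), which is exactly what guarantees that the classical invariants $i_{-1}(\Phi_j)$ admit lifts $S_j$; the surrounding steps (a homogeneous lift exists because the PBW filtration is compatible with the $L_0$-grading, $L_{-1}=-T$ preserves $V^{\fg[[t]]}$, hence all $L_{-1}^kS_j$ lie in the center and commute) are correct and formal. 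The one link in your chain that is not automatic is the identity $A\cdot B=A_{(-1)}B$ equating the normally ordered $(-1)$-product with multiplication in $U(\fg_-)$: this is \emph{false} for general $A=a|0\rangle$, $B=b|0\rangle$ in $V$. It does hold here, but only because $B$ is $\fg[[t]]$-invariant: after normal ordering, every monomial of $A_{(-1)}$ containing a nonnegative mode has that mode rightmost and so annihilates $B$, while the purely negative part of $A_{(-1)}$ equals $a$ (apply it to $|0\rangle$ and use injectivity of $U(\fg_-)\to V$). You should state and justify this small lemma (or cite it from \cite{Fr2} or \cite{F:book}); with it, $ab\,|0\rangle=A_{(-1)}B=B_{(-1)}A=ba\,|0\rangle$ and the argument closes.
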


Let $U(\fg)^{\otimes N}$ be the tensor product of $N$ copies of
$U(\fg)$. We denote the subspace $1\otimes\dots\otimes
1\otimes\fg\otimes 1\otimes\dots\otimes 1\subset U(\fg)^{\otimes
N}$, where $\fg$ stands at the $i$th place, by $\fg^{(i)}$.
Respectively, for any $x\in U(\fg)$ we set
\begin{equation}
x^{(i)}=1\otimes\dots\otimes 1\otimes x\otimes
1\otimes\dots\otimes 1\in U(\fg)^{\otimes N}.
\end{equation}

Let $\diag:U(\fg_-)\hookrightarrow U(\fg_-)^{\otimes N}$ be the
diagonal embedding (i.e. for $x\in\fg_-$, we have $\diag(x)=\sum\limits_{i=1}^nx^{(i)}$). To any nonzero $w\in\BC$, we assign the homomorphism $\phi_w: U(\fg_-)\to U(\fg)$ of evaluation at the point $w$ (i.e., for $g\in\fg$, we have $\phi_w(g\otimes
t^m)=w^mg$). For any collection of pairwise distinct nonzero
complex numbers $w_i, i=1,\dots,n$, we have the following
homomorphism:
\begin{equation}
\phi_{w_1,\dots,w_N}=(\phi_{w_1}\otimes\dots\otimes\phi_{w_N})\circ
\diag:U(\fg_-)\to U(\fg)^{\otimes N}.
\end{equation}
More explicitly, we have
$$\phi_{w_1,\dots,w_N}(g\otimes
t^m)=\sum\limits_{i=1}^nw_i^mg^{(i)}.$$

Consider the following $U(\fg)^{\otimes N}$-valued functions in
the variable $w$
$$
S_j(w;z_1,\dots,z_N):=\phi_{w-z_1,\dots,w-z_N}(S_j).
$$

We define the  Gaudin subalgebra $\A(\ul{z})\subset U(\fg)^{\otimes N}$ as a subalgebra generated by $S_j(w;z_1,\dots,z_N)$ for all $w\in\BC\backslash\{z_1,\ldots,z_N\}$. Due to Theorem~\ref{thm-feigin-frenkel}, this subalgebra is commutative. The subalgebra $\A(\ul{z})\subset U(\fg)^{\otimes N}$ is also known as \emph{Bethe algebra}.

Let $S_j^{i,m}(z_1,\dots,z_N)$ be the coefficients of the principal
part of the Laurent series of $S_j(w;z_1,\dots,z_N)$ at the point
$z_i$, i.e.,
$$S_j(w;z_1,\dots,z_N)=\sum\limits_{m=1}^{m=\deg\Phi_j}S_j^{i,m}(z_1,\dots,z_N)
(w-z_i)^{-m}+O(1)\
\text{as}\ w\to z_i.$$

Taking the generator $S_j$ corresponding to the quadratic Casimir element on $S(\fg)$, one gets the quadratic Gaudin Hamiltonians (\ref{quadratic}) as the residues of $S_j(w;z_1,\dots,z_N)$ at the points $z_1,\ldots,z_N$. The following result is well-known (see e.g. \cite{CFR} for the proof).

\begin{Proposition}\label{generators2}\cite{CFR}
\begin{enumerate}
 \item The elements
$S_j^{i,m}(z_1,\dots,z_N)\in U(\fg)^{\otimes N}$ are semiinvariant under
simultaneous affine transformations of the parameters $z_i\mapsto
az_i+b$ (i.e. $S_j^{i,m}(az_1+b,\dots,az_N+b)$ is proportional to $S_j^{i,m}(z_1,\dots,z_N)$). \item The subalgebra $\A(\ul{z})$ is a free commutative
algebra generated by the elements $S_j^{i,m}(z_1,\dots,z_N)\in
U(\fg)^{\otimes N}$, where $i=1,\dots,N-1$, $l=1,\dots,\rk\fg$,
$m=1,\dots,\deg\Phi_j$, and by
$S_j^{N,\deg\Phi_j}(z_1,\dots,z_N)\in U(\fg)^{\otimes
n}$, where $l=1,\dots,\rk\fg$. \item All the elements of $\A(\ul{z})$ are
invariant with respect to the diagonal action of $\fg$. \item The
center of the diagonal $\diag(U(\fg))\subset U(\fg)^{\otimes N}$
is contained in $\A(\ul{z})$.
\end{enumerate}
\end{Proposition}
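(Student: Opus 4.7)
The plan is to derive all four statements from the properties of $S_j$ recorded in Theorem~\ref{thm-feigin-frenkel} together with the structure of the evaluation morphism $\phi_{w-z_1,\ldots,w-z_N}$. The recurring tool is the interplay between the $L_0$-grading on $U(\fg_-)$ and the substitution $w \mapsto aw + b$: since $\gr S_j = i_{-1}(\Phi_j)$ sits in $L_0$-degree $-(d_j+1)$, the lift $S_j$ may be chosen $L_0$-homogeneous of the same degree. The identity $\phi_{a(w-z_i)}(g \otimes t^m) = a^m \phi_{w-z_i}(g \otimes t^m)$ then gives $S_j(aw+b;\, a z_1 + b,\ldots, a z_N + b) = a^{-(d_j+1)} S_j(w;\ul z)$; expanding the Laurent series of both sides at $w = z_i$ and matching the coefficients of $(w-z_i)^{-m}$ yields $S_j^{i,m}(a\ul z + b) = a^{m - d_j - 1} S_j^{i,m}(\ul z)$, which is (1). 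In particular, taking $a = 1$ shows that the generators depend only on the differences $z_i - z_k$.

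For (3), one uses that $\Phi_j \in S(\fg)^{\fg}$ and that the Feigin--Frenkel quantization is $\fg$-equivariant (the adjoint action of $\fg$ extends canonically to $U(\fg_-)$ and preserves every $S_j$), so $S_j \in U(\fg_-)^{\fg}$; since $\phi_{w-z_1,\ldots,w-z_N}$ intertwines this action with the diagonal one on $U(\fg)^{\otimes N}$, every $S_j^{i,m}(\ul z)$ is $\diag(\fg)$-invariant. For (4), I expand $S_j(w;\ul z)$ at $w = \infty$: since $\phi_{w-z_i}(g \otimes t^m) = (w-z_i)^m g \sim w^m g$ and $S_j$ is $L_0$-homogeneous of degree $-(d_j+1)$, the limit $\Psi_j := \lim_{w \to \infty} w^{d_j+1} S_j(w; \ul z)$ exists, lies in $\diag(U(\fg))$ (the $z_i$-dependence drops out at leading order), and has symbol $\diag(\Phi_j)$. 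Since the $\Phi_j$ freely generate $S(\fg)^{\fg}$, the $\Psi_j$ freely generate $Z(\diag(U(\fg)))$ inside $\A(\ul z)$.

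The main obstacle is (2), the freeness of $\A(\ul z)$ on the specified generators. My strategy is to pass to the associated graded $\gr \A(\ul z) \subset S(\fg)^{\otimes N}$, where the symbols of the $S_j^{i,m}(\ul z)$ are explicit polynomial expressions in the sums $\sum_i (w-z_i)^{-1} g^{(i)}$ obtained from $\gr S_j = i_{-1}(\Phi_j)$; algebraic independence can then be verified at a generic $\ul z$ (for instance via a Jacobian computation) and lifted to the quantum level by a standard filtration argument. The delicate point is to match the precise enumeration: the semi-invariance from (1) and the asymptotic relation at infinity from (4) must account for exactly the missing generators $S_j^{N,m}$ with $m < \deg \Phi_j$ and for the affine-translation redundancy, giving the correct count $(N-1)\sum_j \deg \Phi_j + \ell$.
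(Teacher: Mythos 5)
First, a point of reference: the paper does not prove this proposition at all --- it is quoted from \cite{CFR} (``see e.g.\ \cite{CFR} for the proof''), so there is no in-paper argument to compare yours against, and I can only assess your sketch on its own terms. Parts (1), (3) and (4) are handled correctly: the $L_0$-homogeneity of $S_j$ in degree $-(d_j+1)$ does give $S_j^{i,m}(a\ul{z}+b)=a^{m-d_j-1}S_j^{i,m}(\ul{z})$; and the expansion at $w=\infty$ produces $\Psi_j=\lim_{w\to\infty}w^{d_j+1}S_j(w;\ul{z})=\sum_{i=1}^{N}S_j^{i,d_j+1}(\ul{z})=\diag(\phi_1(S_j))$, which is visibly in $\A(\ul{z})\cap\diag(U(\fg))$ and has symbol $\Phi_j$ on the diagonal, so that the $\Psi_j$ generate the center of $\diag(U(\fg))$ by Harish--Chandra. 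The one caveat in (3) is that the $\ad\fg$-invariance of $S_j$ is a genuine extra input from the Feigin--Frenkel construction (the $S_j$ come from $\fg[[t]]$-invariant vectors of the vacuum module at the critical level); it is true, but it is not contained in Theorem~\ref{thm-feigin-frenkel} as stated, which only controls $\gr S_j$ and hence only yields invariance modulo lower PBW order. You should cite it as such rather than derive it from the theorem.

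Part (2) is where the real content lies, and your sketch leaves both of its halves open. For generation, you must actually eliminate the $\sum_j d_j$ redundant coefficients $S_j^{N,m}$, $m\le d_j$: since $S_j(w;\ul{z})=\sum_{i,m}S_j^{i,m}(\ul{z})(w-z_i)^{-m}$ exactly, and this rational function vanishes to order $d_j+1$ at $w=\infty$ by $L_0$-homogeneity, the coefficients of $w^{-1},\dots,w^{-d_j}$ in its expansion at infinity vanish; this gives a triangular linear system (the analogue of \eqref{eq:reg-sing-oper}) expressing $S_j^{N,1},\dots,S_j^{N,d_j}$ through the remaining coefficients. You allude to ``the asymptotic relation at infinity'' accounting for the missing generators, but this step is the actual proof of generation and needs to be written out. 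More seriously, your plan for algebraic independence --- a Jacobian computation ``at a generic $\ul{z}$'' --- establishes the statement only on a dense open subset of configurations, whereas the proposition is asserted for \emph{every} pairwise-distinct $\ul{z}$, and the paper relies on exactly this uniformity (flatness of the family over $M_{0,N+1}$ in Section~\ref{sect-cactus}, and the limit computation in Proposition~\ref{prop-lim}). To close this you need an argument valid for all $\ul{z}$: for instance, exhibit for each $\ul{z}$ a point of $(\fg^*)^{\oplus N}$ at which the differentials of the symbols of the chosen generators are linearly independent, or use the degeneration argument of \cite{CFR}. As written, (2) is a correct strategy but not a proof.
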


It is easy to see that one can replace $S_j^{N,d_j+1}(z_1,\dots,z_n)$ in (2) by the generators of the center of $\diag(U(\fg))$.

\subsection{Inhomogeneous Gaudin algebras.} In \cite{Ryb1} and \cite{FFTL}, the construction of \cite{FFR} was generalized. Namely, for any collection $z_1,\dots,z_N$ and $\mu\in\gg^*$, there is a commutative subalgebra
$\A_{\mu}(\ul{z})$ in $U(\gg)^{\otimes
n}$ which is invariant with respect to the diagonal action of the centralizer of $\mu$ in $\g$ and is maximal with this property. In particular, $\A(\ul{z}) =
\A_{0}(\ul{z})$ corresponding to $\mu=0$. Let us describe the generators of this algebra analogously to Proposition~\ref{generators2}.

To any $\mu\in\g^*$, we assign the character $\psi_\mu: U(\fg_-)\to \cc$ such that for $g\in\fg$, we have $\psi_\mu(g\otimes
t^m)=\delta_{-1,m}\mu(g)$). For any collection of pairwise distinct nonzero
complex numbers $w_i, i=1,\dots,N$, we can twist the
homomorphism $\phi_{w_1,\dots,w_N}$ by $\mu$:
\begin{multline}
\phi_{w_1,\dots,w_N;\mu}:=(\phi_{w_1}\otimes\dots\otimes\phi_{w_N}\otimes\psi_\mu)\circ
\diag:U(\fg_-)\to\\ \to U(\fg_-)^{\otimes (N+1)}\to U(\fg)^{\otimes N}\otimes\cc=U(\fg)^{\otimes N}.
\end{multline}
More explicitly, we have
$$\phi_{w_1,\dots,w_N;\mu}(g\otimes
t^m)=\delta_{-1,m}\mu(g)+\sum\limits_{i=1}^Nw_i^mg^{(i)}.$$

Consider the following $U(\fg)^{\otimes N}$-valued functions in
the variable $w$
$$
S_j(w;z_1,\dots,z_N;\mu):=\phi_{w-z_1,\dots,w-z_N;\mu}(S_j).
$$

We define the non-homogeneous Gaudin subalgebra $\A_\mu(\ul{z})\subset U(\fg)^{\otimes N}$ as a commutative subalgebra generated by $S_j(w;z_1,\dots,z_N;\mu)$ for all $w\in\BC\backslash\{z_1,\ldots,z_N\}$.

Let $S_j^{i,m}(z_1,\dots,z_N;\mu)$ be the coefficients of the principal
part of the Laurent series of $S_j(w;z_1,\dots,z_N;\mu)$ at the point
$z_i$, i.e.,
$$S_j(w;z_1,\dots,z_N;\mu)=\sum\limits_{m=1}^{m=\deg\Phi_j}S_j^{i,m}(z_1,\dots,z_N;\mu)
(w-z_i)^{-m}+O(1)\
\text{as}\ w\to z_i.$$

Taking the generator $S_j$ corresponding to the quadratic Casimir element on $S(\fg)$, one gets the non-homogeneous quadratic Gaudin Hamiltonians  as the residues of $S_j(w;z_1,\dots,z_N)$ at the points $z_1,\ldots,z_N$.
$$
H_i=\sum\limits_{k\neq i}\sum\limits_{a=1}^{\dim\gg}
\frac{x_a^{(i)}x_a^{(k)}}{z_i-z_k}+
\sum\limits_{a=1}^{\dim\gg}\mu(x_a)x_a^{(i)}.
$$

From \cite{spectra} we have the following description of the generators.

\begin{Proposition}\label{generators-mu}
\begin{enumerate}
\item The elements
$S_j^{i,m}(z_1,\dots,z_N;\mu)\in U(\fg)^{\otimes N}$ are homogeneous under
simultaneous affine transformations of the parameters $z_i\mapsto
az_i+b$ (i.e. $S_j^{i,m}(az_1+b,\dots,az_N+b,a^{-1}\mu)$ is proportional to $S_j^{i,m}(z_1,\dots,z_N;\mu)$). \item The subalgebra $\A_\mu(\ul{z})$ is a free commutative
algebra generated by the elements $S_j^{i,m}(z_1,\dots,z_N;\mu)\in
U(\fg)^{\otimes N}$, where $i=1,\dots,n$, $j=1,\dots,\ell$,
$m=1,\dots,d_j+1$.
\end{enumerate}
\end{Proposition}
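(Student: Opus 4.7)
The plan is to derive both statements from the explicit formula for $\phi_{w_1,\dots,w_N;\mu}$ combined with the $L_0$-homogeneity of the Feigin--Frenkel generators $S_j$, with part~(2) reduced to a partial-fraction analysis at $w=\infty$ together with an associated-graded comparison.

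For part~(1), I would exploit the $L_0$-weight of $S_j$. Translation invariance under $z_i\mapsto z_i+b$ is immediate: shifting $w\mapsto w+b$ preserves each $w-z_i$, so $S_j(w+b;z_1+b,\dots,z_N+b;\mu)=S_j(w;\ul{z};\mu)$, and matching Laurent expansions at $z_i+b$ and $z_i$ identifies the coefficients. For the dilation $z_i\mapsto az_i$, the key identity is
\[
\phi_{aw_1,\dots,aw_N;\mu}(g\otimes t^m)=a^{m}\,\phi_{w_1,\dots,w_N;a\mu}(g\otimes t^m)\qquad(m\les -1),
\]
verified directly from the definition: the $(aw_i)^m$ contribute $a^m$, and at $m=-1$ the constant $\mu(g)$ absorbs the leftover $a^{-1}$ factor into $a\mu$. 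Since $S_j\in U(\fg_-)$ is a sum of $L_0$-monomials of total weight $-(d_j+1)$, applying $\phi$ term by term yields $S_j(aw;a\ul{z};\mu)=a^{-(d_j+1)}S_j(w;\ul{z};a\mu)$. Expanding both sides in $(w-z_i)^{-m}$ after writing $aw-az_i=a(w-z_i)$, then replacing $\mu$ by $a^{-1}\mu$, delivers the scaling relation for $S_j^{i,m}$; composition with the translation symmetry gives the full affine invariance.

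For part~(2), I would first show that the $S_j^{i,m}$ generate $\A_\mu(\ul{z})$. From the explicit formula, $S_j(w;\ul{z};\mu)$ is rational in $w$ with poles only at the $z_i$; moreover every monomial $(w-z_i)^m g^{(i)}$ with $m\les -1$ vanishes at $w=\infty$, while the $\mu$-contribution is a constant $\delta_{-1,m}\mu(g)$. Hence $S_j(w;\ul{z};\mu)$ stays bounded at infinity and, by partial fractions, is determined by its principal parts at the $z_i$ together with the scalar $S_j(\infty;\ul{z};\mu)$; varying $w$ then exhibits all generators of $\A_\mu(\ul{z})$. The main obstacle, which I expect to be the real content of the proof, is algebraic independence of the $N\sum_j(d_j+1)$ generators. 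I would pass to the PBW-associated graded algebra $S(\fg)^{\otimes N}$, where the symbol of $S_j^{i,m}(\ul{z};\mu)$ equals the $(w-z_i)^{-m}$-coefficient of $\Phi_j\bigl(\mu+\sum_i(w-z_i)^{-1}g^{(i)}\bigr)$, and identify the resulting classical subalgebra with a Gaudin-type shift-of-argument (Mishchenko--Fomenko) subalgebra twisted by $\mu$. For regular $\mu$, this classical model is known to be a free polynomial algebra of transcendence degree matching $N\sum_j(d_j+1)$, so the symbols are algebraically independent in $S(\fg)^{\otimes N}$, forcing algebraic independence of the $S_j^{i,m}(\ul{z};\mu)$ themselves in $U(\fg)^{\otimes N}$. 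The delicate point is matching the generator count with the rank of the classical model, which rests on the $L_0$-weighted structure of the Feigin--Frenkel center recorded in Theorem~\ref{thm-feigin-frenkel} and on the homogeneity relation from part~(1), which also shows that once independence is established at one regular $\mu$, it propagates to the full one-parameter scaling orbit.
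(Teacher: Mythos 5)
The paper does not actually prove this Proposition: it is imported from \cite{spectra} (see also \cite{FFTL}, \cite{Ryb1}) with no argument given, so there is no internal proof to compare against and your attempt has to be judged on its own. Your part (1) is correct and complete: the identity $\phi_{aw_1,\dots,aw_N;\mu}=\phi_{w_1,\dots,w_N;a\mu}\circ a^{L_0}$, checked on generators and extended multiplicatively, combined with the $L_0$-homogeneity of $S_j$ of weight $-(d_j+1)$ from Theorem~\ref{thm-feigin-frenkel}, gives $S_j^{i,m}(a\ul{z}+b;a^{-1}\mu)=a^{m-d_j-1}S_j^{i,m}(\ul{z};\mu)$. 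The generation half of part (2) is also fine: each $\phi_{w-z_1,\dots,w-z_N;\mu}(g\otimes t^m)$ tends to the scalar $\delta_{-1,m}\mu(g)$ as $w\to\infty$, so $S_j(w;\ul{z};\mu)$ is a scalar plus the sum of its principal parts, and the Laurent coefficients generate $\A_\mu(\ul{z})$.

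The gap is in the algebraic-independence step. For $N>1$ the $(w-z_i)^{-m}$-coefficient of $\Phi_j\bigl(\mu+\sum_k(w-z_k)^{-1}\xi_k\bigr)$ is \emph{not} homogeneous in $S(\fg)^{\otimes N}$ (expanding $(w-z_k)^{-1}$ for $k\ne i$ around $z_i$ mixes all degrees from $m$ up to $d_j+1$), so it cannot be the PBW symbol of $S_j^{i,m}(\ul{z};\mu)$. The actual top symbol is the top homogeneous component, i.e.\ the corresponding coefficient of $\Phi_j\bigl(\sum_k(w-z_k)^{-1}\xi_k\bigr)$, which is independent of $\mu$. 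These top symbols generate the \emph{classical homogeneous} Gaudin subalgebra and are \emph{not} algebraically independent over the full range $i=1,\dots,N$, $m=1,\dots,d_j+1$: compare Proposition~\ref{generators2}(2), where only $i\les N-1$ plus central elements survive; concretely, for $\fg=\fsl_2$, $N=2$, the symbols of the two inhomogeneous quadratic Hamiltonians are both proportional to $\sum_a x_a^{(1)}x_a^{(2)}$. So the implication you rely on --- independence of symbols forces independence of the elements --- is unavailable because its hypothesis fails. What is true (and what \cite{Ryb1}, \cite{FFTL}, \cite{spectra} actually use) is a transcendence-degree count: one shows that $\gr\A_\mu(\ul{z})$ contains \emph{all} homogeneous components of the classical coefficients, that for regular $\mu$ these generate a free Poisson-commutative subalgebra of transcendence degree $N\sum_j(d_j+1)=\tfrac12(\dim\fg+\rk\fg)N$, and then concludes that a commutative domain of transcendence degree $D$ generated by $D$ elements has those generators algebraically independent. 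You name the right classical input, but the mechanism you propose for transporting it to $U(\fg)^{\otimes N}$ breaks at its first step and needs to be replaced by this dimension count.
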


Denote by $S_j^{\infty,m}(z_1,\dots,z_N;\mu)\in U(\fg)^{\otimes N}$ the coefficients of the principal
part of the Laurent series of $S_j(w;z_1,\dots,z_N;\mu)w^{d_j+1}$ at the point $\infty$.

\begin{Corollary} One can take as free generators of $\A_\mu(\ul{z})$ the elements $S_j^{i,m}(z_1,\dots,z_N;\mu)$ with $i=1,\dots,n-1$, $j=1,\dots,\ell$,
$m=1,\dots,d_j+1$ AND the elements $S_j^{\infty,m}(z_1,\dots,z_N;\mu)$ with $j=1,\dots,\ell$,
$m=1,\dots,d_j+1$.
\end{Corollary}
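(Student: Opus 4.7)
The plan is a triangular change-of-generators argument starting from Proposition~\ref{generators-mu}(2). That proposition already supplies a free generating set of $\A_\mu(\ul{z})$ of cardinality $N\sum_j(d_j+1)$; the claim is that one may replace the block of $\sum_j(d_j+1)$ generators indexed by $i=N$ by the block of $S_j^{\infty,m}$'s at infinity, keeping the total count the same. It therefore suffices to verify that, modulo the retained generators $\{S_j^{i,m}:i<N\}$, the substitution $\{S_j^{N,m}\}_{m=1}^{d_j+1}\to\{S_j^{\infty,m}\}_{m=1}^{d_j+1}$ is invertible for each fixed $j$.

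The tool is partial fractions. Since $S_j(w;\ul{z};\mu)$ is $U(\fg)^{\otimes N}$-valued and rational in $w$ with poles of order at most $d_j+1$ only at the points $z_i$, one has the unique decomposition
$$
S_j(w;\ul{z};\mu)=\sum_{i=1}^{N}\sum_{m=1}^{d_j+1}S_j^{i,m}(w-z_i)^{-m}+P_j(w),
$$
with $P_j(w)$ a polynomial in $w$. The explicit formula $\phi_{w-z_1,\ldots,w-z_N;\mu}(g\otimes t^k)=\delta_{-1,k}\mu(g)+\sum_i(w-z_i)^k g^{(i)}$ shows that each generator of $U(\fg_-)$ tends to $\psi_\mu(g\otimes t^k)$ as $w\to\infty$; since $\phi_{w-z_1,\ldots,w-z_N;\mu}$ is an algebra homomorphism this forces $P_j=\psi_\mu(S_j)\cdot 1$, a scalar.

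Multiplying by $w^{d_j+1}$ and using the binomial expansion $(w-z_i)^{-m}w^{d_j+1}=\sum_{k\ge 0}\binom{m+k-1}{k}z_i^k\,w^{d_j+1-m-k}$ reads off each $S_j^{\infty,m'}$ as an explicit linear combination of the $S_j^{i,m}$'s (plus the scalar $\psi_\mu(S_j)$). Restricting attention to the $i=N$ contributions, the coefficient of $S_j^{N,m}$ in $S_j^{\infty,m'}$ is a binomial expression in $z_N$ that vanishes for $m+m'$ too large and equals $1$ on the appropriate anti-diagonal. Hence the transition matrix is anti-unitriangular, in particular invertible, and inverting it recovers each $S_j^{N,m}$ as a polynomial expression in the $S_j^{\infty,m'}$'s and the $S_j^{i,m}$ with $i<N$. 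Combined with Proposition~\ref{generators-mu}(2), the corollary follows.

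The single point I expect to require some care is the bookkeeping for the top coefficient $S_j^{\infty,d_j+1}$, which receives the scalar contribution $\psi_\mu(S_j)w^{d_j+1}$ from $P_j w^{d_j+1}$: one must fix the indexing so that this scalar sits compatibly on the anti-diagonal of the triangular system without spoiling invertibility. Modulo this, the argument is pure linear algebra and does not require any input beyond Proposition~\ref{generators-mu}(2).
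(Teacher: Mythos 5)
The paper states this Corollary without proof, so there is no argument of the author's to compare yours against; your partial-fraction strategy is certainly the intended one, and its first two steps are correct: $S_j(w;\ul{z};\mu)$ is rational in $w$ with poles of order at most $d_j+1$ only at the $z_i$, and its value at $w=\infty$ is the scalar $\psi_\mu(S_j)$ (each factor $g\otimes t^k$ of each PBW monomial of $S_j$ evaluates to $\delta_{-1,k}\mu(g)+\sum_i(w-z_i)^kg^{(i)}\to\delta_{-1,k}\mu(g)$), so the polynomial part of the partial-fraction decomposition is indeed the constant $\psi_\mu(S_j)$.

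The point you defer to ``bookkeeping'', however, is exactly where the content of the statement lives, and your claimed anti-unitriangularity fails under the literal reading of the definition. Write $S_j(w)w^{d_j+1}=\psi_\mu(S_j)w^{d_j+1}+\sum_{p\ge1}c_p\,w^{d_j+1-p}$ with $c_p=\sum_{i}\sum_{m=1}^{\min(p,d_j+1)}\binom{p-1}{p-m}z_i^{p-m}S_j^{i,m}$. The principal part at $\infty$, i.e.\ the negative powers of the local coordinate $s=w^{-1}$, consists of the coefficients of $w^{d_j+1},\dots,w^{1}$: these are the scalar $\psi_\mu(S_j)$ together with $c_1,\dots,c_{d_j}$. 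Modulo the retained generators $S_j^{i,m}$ with $i<N$, these recover $S_j^{N,1},\dots,S_j^{N,d_j}$ by unitriangularity, but $S_j^{N,d_j+1}$ occurs only in $c_p$ for $p\ge d_j+1$, i.e.\ first in the \emph{constant} term $c_{d_j+1}$ of the expansion, which lies outside the principal part. So in your transition matrix the row corresponding to the top coefficient is, modulo scalars, identically zero rather than sitting on the anti-diagonal, and the proposed set would generate a strictly smaller subalgebra, missing the $\ell$ elements $S_j^{N,d_j+1}$. The argument closes only if one reads $S_j^{\infty,m}$ as the coefficient of $w^{d_j+1-m}$ for $m=1,\dots,d_j+1$ (equivalently: discard the leading scalar and include the constant term $c_{d_j+1}$); with that indexing the matrix $\bigl(\binom{p-1}{p-m}z_N^{p-m}\bigr)_{1\le m\le p\le d_j+1}$ is lower unitriangular, and your change-of-generators argument combined with Proposition~\ref{generators-mu}(2) does complete the proof. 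You should carry out this indexing explicitly rather than postpone it: it is the only nontrivial step.
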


The following fact on the limit points of the family
$\A_{\mu}(\ul{z})$ is well known. We reproduce the proof here for completeness.

\begin{Proposition}\label{prop-lim} We have
$$\lim\limits_{s\to0}\A_{\mu}(sz_1,\dots,sz_N)=
\lim\limits_{s\to0}\A_{s\mu}(\ul{z})=
\diag(\A_{\mu})\cdot\A(\ul{z}) \subset U(\gg)^{\otimes
N}$$ for any regular $\mu\in\gg$. More precisely, both $\diag(\A_{\mu})$ and $\A(\ul{z})$ contain the center $Z$ of the diagonal $U(\fg)$ in $U(\fg)^{\otimes N}$, and the product $\diag(\A_{\mu})\cdot\A(\ul{z})$ is in fact the tensor product $\diag(\A_{\mu})\otimes_Z\A(\ul{z})$.
\end{Proposition}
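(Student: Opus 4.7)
My plan is to prove the two equalities in turn and then the tensor-product refinement. First, applying Proposition~\ref{generators-mu}(1) with $a=s^{-1}$ and $b=0$ shows that $S_j^{i,m}(sz_1,\ldots,sz_N;\mu)$ is proportional to $S_j^{i,m}(z_1,\ldots,z_N;s\mu)$, so $\A_\mu(s\ul{z})=\A_{s\mu}(\ul{z})$ as subalgebras of $U(\fg)^{\otimes N}$ for every $s\ne 0$; in particular the two $s\to 0$ limits coincide.

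For the inclusion $\supseteq$, I would identify the two factors $\diag(\A_\mu)$ and $\A(\ul{z})$ as limits of elements of the family using two different specializations. Since $\psi_{s\mu}=s\psi_\mu$, the generating series $S_j(w;\ul{z};s\mu)$ is polynomial in $s$ with specialization $S_j(w;\ul{z})$ at $s=0$, and taking residues at $z_1,\ldots,z_N$ yields the generators of $\A(\ul{z})$ inside the limit. Switching to the equivalent family $\A_\mu(s\ul{z})$ and evaluating at a point $w$ independent of $s$,
\[
\lim_{s\to 0}\phi_{w-sz_1,\ldots,w-sz_N;\mu}(S_j)\;=\;\phi_{w,\ldots,w;\mu}(S_j)\;=\;\diag\bigl(\eta_{w;\mu}(S_j)\bigr),
\]
where $\eta_{w;\mu}\colon U(\fg_-)\to U(\fg)$ is the twisted single-point evaluation $g\otimes t^m\mapsto w^m g+\delta_{-1,m}\mu(g)$. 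As $w$ varies, the $\eta_{w;\mu}(S_j)$ are the standard shift-of-argument (Mishchenko--Fomenko) generators of $\A_\mu\subset U(\fg)$, so $\diag(\A_\mu)$ sits in the limit as well.

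For the reverse inclusion I would run a Hilbert-series count on the PBW filtration. By Proposition~\ref{generators-mu}(2), $\A_{s\mu}(\ul{z})$ is a free polynomial algebra on $N\sum_j(d_j+1)$ generators of fixed PBW degrees, so flatness of the family forces the $s\to 0$ limit to have the same Hilbert series. Both $\diag(\A_\mu)$ and $\A(\ul{z})$ contain $Z:=\diag(Z(U(\fg)))$ (Proposition~\ref{generators2}(4) for the latter; for the former, the $\diag(\Phi_j)$ already lie in $\A_\mu$), and in fact $\diag(\A_\mu)\cap\A(\ul{z})=Z$: by Proposition~\ref{generators2}(3) any element of $\A(\ul{z})$ is $\diag(\fg)$-invariant, so the intersection sits in $\diag(U(\fg))^{\diag(\fg)}=Z$. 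Hence $\diag(\A_\mu)\cdot\A(\ul{z})=\diag(\A_\mu)\otimes_Z\A(\ul{z})$ is a free polynomial algebra on $N\sum_j(d_j+1)$ generators, matching the Hilbert series of the limit, so the containment is an equality.

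The step I expect to be the main obstacle is making the flatness argument precise --- in particular checking that the explicit generators of Proposition~\ref{generators-mu}(2) specialize at $s=0$ to an algebraically independent system in $\diag(\A_\mu)\otimes_Z\A(\ul{z})$, without acquiring unexpected relations. The intersection formula itself is immediate from the invariance statement in Proposition~\ref{generators2}(3), so the real content is the flatness/generator-counting step.
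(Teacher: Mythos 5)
Your proposal is correct and follows essentially the same route as the paper: identify $\diag(\A_{\mu})$ and $\A(\ul{z})$ inside the limit as limits of suitably organized generators of the family (the paper uses the coefficients $S_j^{\infty,m}$ and $S_j^{i,m}$ where you use evaluation at a fixed $w$ of the rescaled family and specialization at $s=0$), and then conclude by matching the number and degrees of algebraically independent generators. The one step you flag as the main obstacle --- the algebraic independence of the combined generating set, i.e.\ the freeness of $\diag(\A_{\mu})\otimes_Z\A(\ul{z})$ --- is precisely the step the paper also asserts without further argument, so your sketch supplies everything the paper's own proof does (plus the correct observation that $\diag(\A_{\mu})\cap\A(\ul{z})=Z$).
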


\begin{proof} Indeed, the RHS has the same number of algebraically independent generators of the same degrees as that of $\A_{\mu}(z_1,\dots,z_N)$. So we just have to check that the limit subalgebra contains both $\diag(\A_{\mu})$ and $\A(\ul{z})$. It is easy to see that the first is generated by the limits of $S_j^{\infty,m}(z_1,\dots,z_N;\mu)$ and the second is generated by the limits of $S_j^{i,m}(z_1,\dots,z_N;\mu)$.
\end{proof}

\subsection{Opers on the projective line} Consider the Langlands dual Lie algebra ${}^L \fg$ whose Cartan
matrix is the transpose of the Cartan matrix of $\fg$. By $^L G$ we
denote the group of inner automorphisms of $^L \fg$. We fix a Cartan
decomposition
$$
^L \fg={}^L \fn_+\oplus {}^L \fh\oplus {}^L \fn_-.
$$
The Cartan subalgebra ${}^L \fh$ is naturally identified with
$\fh^*$. We denote by ${}^L \Delta$, ${}^L \Delta_+$, and ${}^L \Pi$
the root system of ${}^L \fg$, the set of positive roots, and the set
of simple roots, respectively. We denote by ${}^L \fb_+$ the Borel
subalgebra ${}^L \fn_+\oplus {}^L \fh$.

Set
$$
p_{-1} = \sum_{\alpha^\vee\in{}^L\Pi} e_{-\alpha^\vee}\in {}^L
\fg.
$$
Let
$$
\rho=\frac{1}{2}\sum_{\alpha\in\Delta_+}\alpha\in\fh^*={}^L
\fh.
$$
The operator $\Ad \rho$ defines the principal gradation on ${}^L
\fg$, with respect to which we have a direct sum decomposition
${}^L \fb_+ = \bigoplus_{i\geq 0} ^L \fb_i$.  Let $p_1$ be the
unique element of degree 1 in ${}^L \fn_+$ such that $\{
p_{-1},2\rho,p_1 \}$ is an $\fsl_2$-triple (note that $p_{-1}$
has degree $-1$). Let
$$
V_{\can} = \bigoplus_{i=1}^\ell V_{\can,i}
$$
be the space of $\Ad p_1$-invariants in ${}^L \fn_+$, decomposed
according to the principal gradation. Here $V_{\can,i}$ has degree
$d_i$, the $i$th exponent of $^L \fg$ (and of $\fg$). In particular,
$V_{\can,1}$ is spanned by $p_1$. Now choose a linear generator $p_j$
of $V_{\can,j}$.

Consider the Kostant slice in ${}^L \fg$, $$^L
\fg_{\can} = \left\{ p_{-1} + \sum_{j=1}^\ell y_j p_j, \quad y_j \in
\C \right\}.
$$
By \cite{Ko}, the adjoint orbit of any regular element in the Lie
algebra $ ^L \fg$ contains a unique element which belongs to $^L
\fg_{\can}$. Thus, we have canonical isomorphisms
\begin{equation}    \label{iso can}
{}^L \fg_{\can} \tilde\to {}^L \fg/^L G = {}^L\fh/{}^LW =  \fh^*/W =
\fg^*/G.
\end{equation}

Denote by ${}^L\pi$ the factorization map $^L\fg\to {}^L \fg/{}^L G={}^L \fg_{\can}$. For an affine curve $U = \Spec R$
and an \'etale coordinate $t$ on $U$, the space $\Op_{^L G}(U)$ of $^L
G$-opers on $U$ is isomorphic to the quotient of the space of
connections of the form
$$
d + (p_{-1} + {\mathbf v}(t))dt, \qquad {\mathbf v}(t) \in
{}^L\fb_+\otimes R
$$
by the free action of the group $^L N_+ \otimes R$ of regular algebraic maps
$U\to {}^L N_+$, where $^L N_+$ is the maximal unipotent subgroup of
$^L G$ such that $\Lie {}^L N_+={}^L \fn_+$. Each oper has a unique
representative of the form
$$
d + \left( p_{-1} + \sum_{j=1}^\ell v_j(t) p_j
\right)dt,\qquad v_j(t)\in R.
$$

In particular, the space $\Op_{^L G}(D^\times)$ of $^L G$-opers on the
formal punctured disc $D^\times=\Spec\C\ppart$ is isomorphic to the
space of connections of the form
$$
d + \left( p_{-1} + \sum_{j=1}^\ell v_j(t) p_j
\right)dt,\qquad v_j(t)\in\C\ppart.
$$

We will consider opers on $\PP^1$ with regular singularities at a finite number
of points. The oper has regular singularity at $z\in\PP^1$ if
has the following form in local coordinate $t$ at $z$:
$$
d + \left( p_{-1} +
\sum_{j=1}^\ell (v_{j} (t-z)^{-(d_j+1)}+o((t-z)^{-(d_j+1)})) p_j
\right)dt.
$$
The residue of such oper at the point $z$ is ${}^L\pi(p_{-1} - \rho +
\sum_{j=1}^\ell v_{j} p_j)\in {}^L \fg/{}^L G$.

We denote by $\Op_{^L
G}({\ul{z}})$ the space of $^L G$-opers on
$\PP^1\backslash\{z_1,\dots z_N,\infty\}$ with regular
singularities at the points $z_i, i=1,\ldots,N$, and at $\infty$. Each oper from this space may be
uniquely represented in the following form:
$$
d + \left( p_{-1} + \sum_{i=1}^N
\sum_{j=1}^\ell \sum_{n=0}^{d_j} v_{j,n}^{(i)} (t-z_i)^{-n-1} p_j
\right)dt,
$$
where
\begin{equation}\label{eq:reg-sing-oper}
\sum_{i=1}^N\sum_{n=0}^m v_{j,n}^{(i)}\binom{m}{n}z_i^{m-n} = 0
\end{equation}
for any $j=1,\ldots,\ell$ and $m=0,\ldots,d_j-1$. Thus, $\Op_{^L
G}({\ul{z}})$ is an affine space of dimension
$\frac{1}{2}(\dim\fg+\rk\fg)(N-1)+\rk\fg$.

Following \cite{FFTL} and \cite{spectra}, we denote by $\Op_{^L
G}(\PP^1)_{\ul{z};\pi(-\mu)}$ the space of $^L G$-opers on
$\PP^1\backslash\{z_1,\dots z_N,\infty\}$ with regular
singularities at the points $z_i, i=1,\ldots,N$, and with
irregular singularity of order $2$ at the point $\infty$ with the
$2$-residue $\pi(-\mu) \in\ ^L\g/^L G = \g^*/G$, where $\pi: \g^*
\to \g^*/G$ is the projection. Each oper from this space may be
uniquely represented in the following form:
$$
d + \left( p_{-1} + \sum_{j=1}^\ell \overline\mu_j p_j + \sum_{i=1}^N
\sum_{j=1}^\ell \sum_{n=0}^{d_j} v_{j,n}^{(i)} (t-z_i)^{-n-1} p_j
\right)dt,
$$
where
$$
p_{-1}+\sum_{j=1}^\ell \overline{\mu}_j p_j
$$
is the (unique) element of $^L \gg_{\can}$ contained in the $^L
G$-orbit corresponding to the $G$-orbit of $\mu \in \g^*$ under the
isomorphism \eqref{iso can}. Thus, $\Op_{^L
G}(\PP^1)_{\ul{z};\pi(-\mu)}$ is an affine space of dimension
$\frac{1}{2}(\dim\gg+\rk\gg)N$.

We will be interested in the special case of $\Op_{^L
G}(\PP^1)_{\ul{z};\pi(-\mu)}$ when $\mu$ is the principal nilpotent element $f\in\fg$. Since $\pi(f)=0$, the space $\Op_{^L
G}(\PP^1)_{\ul{z};\pi(-f)}$ is the space of opers with singularities as above but with \emph{zero} $2$-residue at $\infty\in\pone$. Hence the space $\Op_{^L
G}(\PP^1)_{\ul{z}}$ is naturally a subspace in $\Op_{^L
G}(\PP^1)_{\ul{z};\pi(-f)}$. On the punctured disc $D_\infty^\times$
at $\infty$ (with the coordinate $s=t^{-1}$) each element of $\Op_{^L
G}(\PP^1)_{\ul{z};\pi(-f)}$ may be represented by a connection of
the form (see \cite[Section~5.4]{FFTL}):
$$
d - \left(p_{-1} - \sum_{j=1}^\ell s^{-2d_j-1} u_j(s) p_j\right)ds, \qquad u_j(s) = \sum_{n=0}^{\infty}
u_{j,n} s^n.
$$
The equations~(\ref{eq:reg-sing-oper}) which define subspace $\Op_{^L
G}(\PP^1)_{\ul{z}}$ inside $\Op_{^L
G}(\PP^1)_{\ul{z};\pi(-f)}$ can be rewritten as $u_{j,n}=0$ for $j=1\ldots,\ell,\ n=0,\ldots,d_j-1$.

\subsection{Gaudin algebra and opers}
We have the following relation between Gaudin algebras and spaces of opers, see \cite{FFR,FFTL,spectra}.

\begin{Proposition}\label{multi-point-isomorphism} There is an isomorphism $\A(\ul{z})\simeq\cc[\Op_{^L G}(\PP^1)_{\ul{z}}]$. For regular $\mu$ there is an isomorphism $\A_\mu(\ul{z})\simeq\cc[\Op_{^L G}(\PP^1)_{\ul{z};\pi(-\mu)}]$. The generators $\Phi_1,\ldots,\Phi_\ell$ of $S(\g)^\g$ can be chosen such that these isomorphisms take $S_j(t;z_1,\dots,z_N)$ (resp. $S_j(t;z_1,\dots,z_N;\mu)$) to $v_j(t)$.
\end{Proposition}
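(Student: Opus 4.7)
The plan is to reduce the statement to the local Feigin--Frenkel isomorphism between the center of the completed enveloping algebra of $\hat\g$ at the critical level and functions on local $^L G$-opers, and then to globalize via a coinvariants construction at the points $z_1,\ldots,z_N,\infty$. Feigin--Frenkel identify the commutative subalgebra of $U(\g_-)$ generated by the elements $S_j$ of Theorem~\ref{thm-feigin-frenkel} (together with their $L_{-1}$-translates) with the algebra of functions on opers on the punctured disc $D^\times$. With the $\Phi_j$ chosen to match the Kostant basis $p_j$ appearing in the normal form of the oper connection, this local isomorphism sends the generating current $S_j(t)$ to the oper coefficient $v_j(t)$.

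Next I would interpret the homomorphism $\phi_{w-z_1,\ldots,w-z_N}\colon U(\g_-)\to U(\g)^{\otimes N}$ as the composition of the diagonal embedding into $U(\g_-)^{\otimes N}$ with the product of evaluation maps at the $z_i$. Geometrically this is the coinvariants construction for the vacuum module at each $z_i$ with respect to the Lie algebra of $\g$-valued regular functions on $\PP^1\setminus\{z_1,\ldots,z_N,\infty\}$. Consequently the rational function $S_j(w;z_1,\ldots,z_N)$ of $w$ corresponds, under Feigin--Frenkel, to the evaluation at $w$ of the oper coefficient $v_j(t)$ of the universal global oper in $\Op_{^L G}(\PP^1)_{\ul z}$. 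Taking principal parts at each $z_i$ produces an algebra homomorphism $\cc[\Op_{^L G}(\PP^1)_{\ul z}]\to\A(\ul z)$ sending the affine coordinates $v_{j,n}^{(i)}$ to the generators $S_j^{i,m}(\ul z)$. This map is surjective by definition of $\A(\ul z)$. For injectivity I compare the numbers of algebraically independent generators: by Proposition~\ref{generators2}(2) the algebra $\A(\ul z)$ is polynomial on exactly $\frac12(\dim\g+\rk\g)(N-1)+\rk\g$ generators, which equals the dimension of the affine space $\Op_{^L G}(\PP^1)_{\ul z}$ after the relations~\eqref{eq:reg-sing-oper} are imposed on the $v_{j,n}^{(i)}$. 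Hence the surjection is an isomorphism.

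For the inhomogeneous case I would replace the vacuum module at $\infty$ by the generalized Whittaker module twisted by the character $\psi_\mu$. On the oper side this replaces the regular singularity at $\infty$ by an order-$2$ irregular singularity with $2$-residue $\pi(-\mu)\in\g^*/G$; via the Kostant isomorphism~\eqref{iso can} the coordinates $\overline\mu_j$ of the irregular part are matched with the invariants of $\mu$, which is exactly what the twist $\psi_\mu$ in $\phi_{w-z_1,\ldots,w-z_N;\mu}$ contributes to the expansion at $\infty$. Counting generators via Proposition~\ref{generators-mu}(2) against the dimension $\frac12(\dim\g+\rk\g)N$ of $\Op_{^L G}(\PP^1)_{\ul z;\pi(-\mu)}$ then gives injectivity of the resulting surjection.

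The main obstacle is the rigorous local-to-global compatibility: one must check that the local Feigin--Frenkel isomorphisms at the $N+1$ marked points fit together into a single global homomorphism from functions on opers on $\PP^1$, and that the Whittaker twist at $\infty$ precisely matches the irregular-singularity datum rather than merely matching it up to some renormalization. This is the content of \cite{FFR} in the homogeneous case and of \cite{FFTL} in the inhomogeneous case (with further details in \cite{spectra}), whose arguments I would invoke directly rather than reproduce.
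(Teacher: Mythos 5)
The paper offers no proof of Proposition~\ref{multi-point-isomorphism} at all --- it is stated as a known result with a bare citation to \cite{FFR}, \cite{FFTL} and \cite{spectra} --- and your sketch is a faithful outline of exactly the argument those references give: localize to the Feigin--Frenkel isomorphism on $D^\times$, globalize by coinvariants (with the $\psi_\mu$-twist at $\infty$ producing the order-$2$ irregular singularity), and upgrade the resulting surjection to an isomorphism by matching the generator count of Proposition~\ref{generators2}(2) (resp.\ Proposition~\ref{generators-mu}(2)) against the dimension of the affine space of opers. The only step worth making explicit is that the map $v_{j,n}^{(i)}\mapsto S_j^{i,m}(\ul{z})$ is well defined on $\cc[\Op_{^L G}(\PP^1)_{\ul{z}}]$, i.e.\ that the linear relations~\eqref{eq:reg-sing-oper} hold among the $S_j^{i,m}(\ul{z})$; this follows from the $L_0$-homogeneity of $S_j$ of degree $-(d_j+1)$, which forces $S_j(w;\ul{z})$ to decay like $w^{-(d_j+1)}$ at infinity.
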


Next, it is proved in \cite[Theorem, 5.7]{FFTL}, that for any
collection of $\gg$-modules $M_i$ with highest weights $\l_i$,
$i=1,\dots,N$, the natural homomorphism
$$
\A_\mu(\ul{z})\to\End(M_1\otimes\dots\otimes M_N)
$$
factors through the algebra of functions on the subspace $\Op_{^L
G}(\PP^1)_{\ul{z};\pi(-\mu)}^{\ul{\l}}\subset\Op_{^L
G}(\PP^1)_{\ul{z};\pi(-\mu)}$ which consists of the opers with the
$1$-residue $\pi(-\l_i-\rho)$ at $z_i$. Moreover, for integral
dominant $\l_i$ the action of $\A_\mu(\ul{z})$ on the tensor
product of the finite-dimensional modules $V_{\l_i}$ (resp. the action of $\A(\ul{z})$ on $V_{\ul{\l}}^{sing}$) factors through
the algebra of functions on \emph{monodromy-free} opers from $\Op_{^L
G}(\PP^1)_{\ul{z};\pi(-\mu)}^{\ul{\l}}$ (resp. from $\Op_{^L
G}(\PP^1)_{\ul{z}}^{\ul{\l}}$), cf. \cite{FG}.

For every integral dominant weight $\l$, the set of monodromy-free
opers on the punctured disc $D_z^\times$ at $z$ with the regular
singularity with the residue $-\l-\rho$ at $z$ is defined by finitely
many polynomial relations. These polynomial relations are indexed by positive roots of $\g$, see \cite{FG}.

For any collection of integral dominant weights $\l_1,\dots,\l_N$
attached to the points $z_1,\dots,z_N$, we denote by
$P_{z_k;\alpha}^{\ul{z};\mu;\ul{\l}}$ the
polynomial in $v_{j,n}^{(i)}$ expressing the "no-monodromy" condition
at $z_k$ corresponding to the root $\alpha \in \Delta_+$. We have the following result:

\begin{Theorem}\label{theorem-spectra}\cite{spectra}
For any $N$-tuple of pairwise distinct complex numbers
$z_1,\dots,z_N\in\cc$ and any regular $\mu$ the subalgebra
$\A_\mu(z_1,\dots,z_N)\subset U(\gg)^{\otimes N}$ has a cyclic vector
in $V_{\ul{\l}}=V_{\l_1}\otimes\dots\otimes V_{\l_N}$.  The annihilator
of $V_{\ul{\l}}$ in $\A_\mu(z_1,\dots,z_N)$ is generated by the
no-monodromy conditions $P_{z_k;\alpha}^{\ul{z};\mu;\ul{\l}}$. In particular, the joint eigenvalues of
$\A_\mu(z_1,\dots,z_N)$ in $V_\l$ (without multiplicities) are in
one-to-one correspondence with the monodromy-free opers from $\Op_{^L
G}(\PP^1)_{\ul{z};\pi(-\mu)}^{\ul{\l}}$.
\end{Theorem}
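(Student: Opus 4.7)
The plan is to reduce all three assertions of the theorem to a single dimension count on a finite scheme of opers, using the oper-theoretic description of $\A_\mu(\ul{z})$ from Proposition~\ref{multi-point-isomorphism}. The action of $\A_\mu(\ul{z})$ on $V_{\ul{\l}}$ gives a homomorphism
$$\rho_{\ul{\l}}\colon\cc[\Op_{^L G}(\PP^1)_{\ul{z};\pi(-\mu)}]\to\End(V_{\ul{\l}}).$$
By \cite[Theorem~5.7]{FFTL} together with the Frenkel--Gaitsgory description of finite-dimensional quotients of Wakimoto modules \cite{FG}, this $\rho_{\ul{\l}}$ kills both the ideal enforcing the $1$-residues $\pi(-\l_i-\rho)$ at the $z_i$ and the no-monodromy ideal generated by the polynomials $P^{\ul{z};\mu;\ul{\l}}_{z_k;\alpha}$. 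Call the resulting finite closed subscheme $\Op_{mf}\subset\Op_{^L G}(\PP^1)_{\ul{z};\pi(-\mu)}^{\ul{\l}}$. Since any commutative subalgebra of $\End(V_{\ul{\l}})$ has $\cc$-dimension at most $\dim V_{\ul{\l}}$, with equality if and only if the subalgebra is maximal commutative (equivalently, admits a cyclic vector), all three conclusions follow at once from the inequality $\dim\cc[\Op_{mf}]\geq\dim V_{\ul{\l}}$: it forces equality throughout, injectivity of the induced map $\bar\rho_{\ul{\l}}\colon\cc[\Op_{mf}]\to\End(V_{\ul{\l}})$, cyclicity, and the bijection of joint eigenvalues with closed points of $\Op_{mf}$.

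To establish the lower bound $\dim\cc[\Op_{mf}]\geq\dim V_{\ul{\l}}$, I would argue by degeneration. Using the rescaling $z_i\mapsto sz_i$ together with Proposition~\ref{prop-lim}, the algebras $\A_\mu(s\ul{z})$ form a flat one-parameter family whose limit at $s=0$ is $\diag(\A_\mu)\otimes_Z\A(\ul{z})$; the corresponding family of oper schemes $\Op_{mf}$ is itself flat over the $s$-line, so by upper semicontinuity of fiber length it suffices to prove the bound after specializing to $s=0$. Iterating this procedure (or inducting on $N$ by merging pairs of points in stages) reduces the question to the base case $N=1$, where the claim is cyclicity of the shift-of-argument subalgebra $\A_\mu\subset U(\fg)$ on each $V_\l$ for regular $\mu$, which is known by the Feigin--Frenkel--Toledano-Laredo construction. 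There the cyclic vector is produced explicitly from Wakimoto modules at the critical level of $\widehat{\fg}$, and the eigenvectors are parameterized by monodromy-free opers on the punctured formal disc, matching the oper side of the count.

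The main obstacle is rigorous control of the degeneration on the oper side: one must check that the monodromy-free locus remains flat in $s$ and that its specialization at $s=0$ factors cleanly as a fiber product matching the tensor factorization $\diag(\A_\mu)\otimes_Z\A(\ul{z})$ from Proposition~\ref{prop-lim}. This requires a careful analysis of how the no-monodromy polynomials $P^{\ul{z};\mu;\ul{\l}}_{z_k;\alpha}$ behave under the scaling limit, which in turn relies on the Wakimoto realization of the $\A_\mu(\ul{z})$-action and the Feigin--Frenkel description of the center at the critical level. Once this flatness and tensor factorization are in hand, the induction is formal and the base case is supplied by the existing shift-of-argument cyclicity theorem; the upper bound $\dim\bar\rho_{\ul{\l}}(\cc[\Op_{mf}])\leq\dim V_{\ul{\l}}$ is automatic, so the two bounds close the argument.
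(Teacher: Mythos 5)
First, a point of comparison: the paper does not prove this theorem at all --- it is quoted from \cite{spectra}, and the only hint given here about its proof is the Remark immediately following it, which identifies the engine of the argument as the fact that the no-monodromy polynomials $P_{z_k;\alpha}^{\ul{z};\mu;\ul{\l}}$ form a \emph{regular sequence}, so that the quotient ring is a finite complete intersection whose dimension can be computed exactly and matched with $\dim V_{\ul{\l}}$. Your proposal never engages with this regular-sequence property, and it breaks down at its pivot. The assertion that ``any commutative subalgebra of $\End(V_{\ul{\l}})$ has $\cc$-dimension at most $\dim V_{\ul{\l}}$, with equality if and only if the subalgebra is maximal commutative (equivalently, admits a cyclic vector)'' is false: for $V=\cc^4$ the subalgebra $\cc\cdot\mathrm{id}\oplus\mathrm{span}(E_{13},E_{14},E_{23},E_{24})$ is commutative of dimension $5>\dim V$, while $\cc\cdot\mathrm{id}\oplus\mathrm{span}(E_{13},E_{14},E_{23})$ is commutative of dimension $4=\dim V$, acts faithfully, and has no cyclic vector (it maps every $v$ into $\mathrm{span}(v,e_1,e_2)$). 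A cyclic vector forces $\dim A=\dim V$, but not conversely; so even granting your lower bound $\dim\cc[\Op_{mf}]\ge\dim V_{\ul{\l}}$, neither injectivity of $\bar\rho_{\ul{\l}}$ nor cyclicity follows. (One could salvage this by observing that a \emph{faithful} module of length equal to that of an Artinian \emph{Gorenstein} algebra is free of rank one --- but Gorenstein-ness is exactly where the complete-intersection property must enter, and faithfulness is itself the second assertion of the theorem, so it cannot be extracted from the dimension count alone.)

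The degeneration step is also oriented the wrong way: upper semicontinuity of fiber length bounds the \emph{generic} length by the length at $s=0$, so it cannot transfer a lower bound from the special fiber to $s\ne 0$; and if you instead assert flatness of the monodromy-free locus over the $s$-line, you have assumed a statement essentially equivalent to the regular-sequence claim that is the real content. A semicontinuity argument also cannot reach \emph{all} pairwise distinct $\ul{z}$ and all regular $\mu$, as the theorem requires. Finally, your base case $N=1$ --- cyclicity of $\A_\mu\subset U(\fg)$ on $V_\l$ --- is not supplied by the Feigin--Frenkel--Toledano-Laredo construction of the algebra; it is precisely the main theorem of \cite{spectra} (the $N=1$ instance of the statement being proved), so the induction is circular. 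What the cited proof actually supplies, and what your outline is missing, is (a) a direct argument that a specific vector (the product of extremal weight vectors) is cyclic, and (b) the regular-sequence/B\'ezout computation giving $\dim\cc[\Op_{mf}]=\dim V_{\ul{\l}}$ exactly, which then yields the statement about the annihilator.
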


\begin{Remark} This theorem can be regarded as an inhomogeneous version of the Bethe Ansatz conjecture. It turns to be much easier than the homogeneous one since the polynomials $P_{z_k;\alpha}^{\ul{z};\mu;\ul{\l}}$ form a regular sequence in the polynomial ring $\cc[\Op_{^L G}(\PP^1)_{\ul{z};\pi(-\mu)}^{\ul{\l}}]$ and hence one can check that the dimension of the quotient ring coincides with the dimension of $V_{\ul{\l}}$. There is no analogous argument for the homogeneous Gaudin model.
\end{Remark}

Denote by $P_{z_k;\alpha}^{\ul{z};0;\ul{\l}}$ the restrictions of the polynomials $P_{z_k;\alpha}^{\ul{z};f;\ul{\l}}$ to the subspace $\Op_{^L
G}(\PP^1)_{\ul{z}}\subset\Op_{^L
G}(\PP^1)_{\ul{z};\pi(-f)}^{\ul{\l}}$. The set of common zeroes of all $P_{z_k;\alpha}^{\ul{z};0;\ul{\l}}$  is the set of monodromy-free opers from $\Op_{^L
G}(\PP^1)_{\ul{z}}^{\ul{\l}}$.

\section{Main results}\label{sect-results}

\subsection{Formulation of the Main Theorem}

\begin{Theorem}\label{thm-main} The space of singular vectors in the tensor product of $\gg$-modules $V_{\ul{\l}}^{sing}$ is cyclic as
an $\A(\ul{z})$-module. The annihilator of $V_{\ul{\l}}^{sing}$ in $\A(\ul{z})$ is generated
by the no-monodromy conditions $P_{z_k;\alpha}^{\ul{z};0;\ul{\l}}$ on opers from $\Op_{^L G}(\PP^1)_{\ul{z}}^{\ul{\l}}$.
\end{Theorem}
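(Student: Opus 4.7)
The plan is to deduce the Main Theorem from its inhomogeneous analogue (Theorem~\ref{theorem-spectra}) applied to $\mu=f$, the principal nilpotent element of $\fg$, by passing to an associated graded. Concretely, Theorem~\ref{theorem-spectra} for $\mu=f$ asserts that $\A_f(\ul{z})$ acts cyclically on $V_{\ul{\l}}$ with annihilator generated by the no-monodromy polynomials $P_{z_k;\alpha}^{\ul{z};f;\ul{\l}}$. A natural cyclic vector is the highest-weight tensor $v^+:=v_{\l_1}^+\otimes\cdots\otimes v_{\l_N}^+\in V_{\ul{\l}}^{sing}$.

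Using the rescaling $f\mapsto sf$, $s\in\BC^*$, together with Proposition~\ref{prop-lim} (which identifies the flat limit of $\A_{sf}(\ul{z})$ at $s=0$ with $\diag(\A_f)\cdot\A(\ul{z})$), I equip $\A_f(\ul{z})$ with a decreasing $\BZ_{\le 0}$-filtration $F^\bullet$ whose associated graded has $\A(\ul{z})$ as its degree-$0$ component $\gr^0\A_f(\ul{z})$. On the module side, use the decreasing filtration $F_{-k}V_{\ul{\l}}:=\diag(\fn_+)^kV_{\ul{\l}}$, whose degree-$0$ graded piece $\gr_0V_{\ul{\l}}=V_{\ul{\l}}/\diag(\fn_+)V_{\ul{\l}}$ is canonically isomorphic to $V_{\ul{\l}}^{sing}$ via the composition $V_{\ul{\l}}^{sing}\hookrightarrow V_{\ul{\l}}\twoheadrightarrow V_{\ul{\l}}/\diag(\fn_+)V_{\ul{\l}}$. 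The key compatibility to check is that elements of $F^k\A_f(\ul{z})$ send $F_jV_{\ul{\l}}$ into $F_{j+k}V_{\ul{\l}}$.

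With these filtrations in place, cyclicity descends as follows: given $w\in V_{\ul{\l}}^{sing}$, cyclicity for $\A_f(\ul{z})$ yields $a\in\A_f(\ul{z})$ with $a\cdot v^+=w$, and its class $\tilde a\in\gr^0\A_f(\ul{z})=\A(\ul{z})$ satisfies $\tilde a\cdot v^+\equiv w\bmod\diag(\fn_+)V_{\ul{\l}}$; since both $\tilde a\cdot v^+$ and $w$ lie in $V_{\ul{\l}}^{sing}$, which meets $\diag(\fn_+)V_{\ul{\l}}$ trivially, we conclude $\tilde a\cdot v^+=w$, hence $\A(\ul{z})\cdot v^+=V_{\ul{\l}}^{sing}$. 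For the annihilator, the leading symbols of the relations $P_{z_k;\alpha}^{\ul{z};f;\ul{\l}}$ are the $P_{z_k;\alpha}^{\ul{z};0;\ul{\l}}$; combined with a dimension count (the $P_{z_k;\alpha}^{\ul{z};0;\ul{\l}}$ cut out the monodromy-free locus in $\Op_{{}^L G}(\PP^1)_{\ul{z}}^{\ul{\l}}$, whose coordinate ring should have the expected dimension $\dim V_{\ul{\l}}^{sing}$), this identifies the annihilator of $V_{\ul{\l}}^{sing}$ in $\A(\ul{z})$ with the ideal generated by these polynomials.

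The main obstacle is the precise construction of the filtration on $\A_f(\ul{z})$ and the verification that its degree-$0$ graded component is exactly $\A(\ul{z})$---neither larger (which would prevent descent of cyclicity to the intended algebra) nor smaller---and that the compatibility $F^k\cdot F_j\subseteq F_{j+k}$ holds strictly. These statements rely crucially on the structure of $f$ as a principal nilpotent and on the explicit form of the generators of $\A_f(\ul{z})$ via the Kostant slice.
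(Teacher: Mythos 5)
Your overall strategy---degenerating $\A_f(\ul{z})$ for the principal nilpotent $f$ to $\A(\ul{z})$ and descending cyclicity from Theorem~\ref{theorem-spectra}---is the same as the paper's, but the filtration you put on the module side is wrong, and this breaks the descent step. The composition $V_{\ul{\l}}^{sing}\hookrightarrow V_{\ul{\l}}\twoheadrightarrow V_{\ul{\l}}/\diag(\fn_+)V_{\ul{\l}}$ is not injective: for every nontrivial $\diag(\fg)$-isotypic component the highest weight vectors already lie in $\diag(\fn_+)V_{\ul{\l}}$ (already for $\fg=\fsl_2$ and $V_1\otimes V_1$ one has $v_+\otimes v_+=e(v_+\otimes v_-)$), so $V_{\ul{\l}}/\diag(\fn_+)V_{\ul{\l}}$ is canonically the space of \emph{lowest} weight vectors $V_{\ul{\l}}^{\diag(\fn_-)}$, not $V_{\ul{\l}}^{sing}$. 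In particular your assertion that $V_{\ul{\l}}^{sing}$ meets $\diag(\fn_+)V_{\ul{\l}}$ trivially is false, and the conclusion $\tilde a\cdot v^+=w$ does not follow.

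Replacing $\fn_+$ by $\fn_-$ would repair the identification of the degree-zero quotient with $V_{\ul{\l}}^{sing}$, but it exposes the real difficulty, which your proposal does not address: the negatively filtered part of $\A_f(\ul{z})$ does not carry $V_{\ul{\l}}$ into $\diag(\fn_-)V_{\ul{\l}}$, because the filtered action of $\A_f(\ul{z})$ mixes $\diag(\fg)$-isotypic components whose top $\diag(h)$-degrees differ. The paper handles this by isolating the singular vectors isotypic component by isotypic component: the key step is Lemma~\ref{lem-isotypic}, which uses the Jordan decomposition of the image of a deformation $\tilde{C}\in\A_f(\ul{z})$ of a generic central element $C$ to produce an $\A_f(\ul{z})$-stable decomposition $V_{\ul{\l}}=\bigoplus_\nu J_\nu$ with $\gr J_\nu=I_\nu$; only then does one pass to the top graded quotient $J_\nu^{(top)}/J_\nu^{(top-1)}$, a cyclic module over $\A_f(\ul{z})/\A_f(\ul{z})_{(-1)}=\A(\ul{z})$ isomorphic to $I_\nu^{top}=m_\nu V_\nu^{\fn_+}$. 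Nothing in your proposal plays the role of this lemma. Two further points: Theorem~\ref{theorem-spectra} does not assert that the highest weight tensor $v^+$ is a cyclic vector, so that claim needs separate justification (the paper never needs to name the cyclic vector of $V_{\ul{\l}}$); and for the annihilator you simply posit that $\cc[\Op_{^L G}(\PP^1)_{\ul{z}}^{\ul{\l}}]/(P_{z_k;\alpha}^{\ul{z};0;\ul{\l}})$ has dimension $\dim V_{\ul{\l}}^{sing}$, which is essentially what must be proved---the regular-sequence argument available for regular $\mu$ has no homogeneous analogue, and the paper instead obtains the inequality $\le\dim V_{\ul{\l}}^{sing}$ from Lemma~\ref{lem-isotypic2} and the reverse inequality from the already-established cyclicity.
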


This Theorem has an obvious corollary which is often referred to as Bethe ansatz conjecture (in the Feigin-Frenkel form):

\begin{Corollary}\label{cor-main} There is a bijection between the set of joint eigenvalues of $\A(\ul{z})$ (without multiplicities) on $V_{\ul{\l}}^{sing}$ and the set of
  monodromy-free opers from $\Op_{^L
G}(\PP^1)_{\ul{z}}^{\ul{\l}}$.
\end{Corollary}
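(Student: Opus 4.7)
The plan is to deduce the corollary from Theorem~\ref{thm-main} combined with Proposition~\ref{multi-point-isomorphism}; the deduction is essentially a translation between commutative algebra and the geometry of opers, with no new analytic input required.

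The joint eigenvalues (without multiplicities) of the commutative algebra $\A(\ul{z})$ acting on the finite-dimensional space $V_{\ul{\l}}^{sing}$ are, by standard linear algebra, in natural bijection with the set of $\C$-algebra homomorphisms to $\C$ from the image $\bar A \subset \End(V_{\ul{\l}}^{sing})$ of the action, equivalently with the set of maximal ideals (i.e.\ closed points) of $\bar A$. By definition $\bar A = \A(\ul{z})/\Ann(V_{\ul{\l}}^{sing})$, and Theorem~\ref{thm-main} asserts that the ideal $\Ann(V_{\ul{\l}}^{sing})$ is generated by the no-monodromy polynomials $P_{z_k;\alpha}^{\ul{z};0;\ul{\l}}$. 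Combining this with Proposition~\ref{multi-point-isomorphism} and the discussion after it, according to which the action of $\A(\ul{z})$ on $V_{\ul{\l}}^{sing}$ factors through $\C[\Op_{^L G}(\PP^1)_{\ul{z}}^{\ul{\l}}]$, one obtains the identification
$$
\bar A \;\simeq\; \C\!\left[\Op_{^L G}(\PP^1)_{\ul{z}}^{\ul{\l}}\right]\big/\bigl(P_{z_k;\alpha}^{\ul{z};0;\ul{\l}}\bigr).
$$

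To finish, I would observe that the set of closed points of $\Spec \bar A$ coincides set-theoretically with the common vanishing locus of the polynomials $P_{z_k;\alpha}^{\ul{z};0;\ul{\l}}$ inside the affine space $\Op_{^L G}(\PP^1)_{\ul{z}}^{\ul{\l}}$, which is, by the definition recalled in the paragraph just before the statement of the Main Theorem, exactly the set of monodromy-free opers in $\Op_{^L G}(\PP^1)_{\ul{z}}^{\ul{\l}}$. This yields the claimed bijection. No genuine obstacle arises at this stage; in particular one does not need to decide whether $\bar A$ is reduced, since the bijection is stated at the level of closed points (i.e.\ joint eigenvalues without multiplicities) rather than at the level of scheme structure. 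All the real work is in Theorem~\ref{thm-main} itself; the corollary is its geometric reformulation.
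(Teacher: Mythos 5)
Your deduction is correct and matches the paper's treatment: the paper states Corollary~\ref{cor-main} as an immediate consequence of Theorem~\ref{thm-main}, and your argument is precisely that deduction spelled out (joint eigenvalues $\leftrightarrow$ characters of the image algebra $\A(\ul{z})/\Ann(V_{\ul{\l}}^{sing})\simeq\cc[\Op_{^L G}(\PP^1)_{\ul{z}}^{\ul{\l}}]/(P_{z_k;\alpha}^{\ul{z};0;\ul{\l}})$ $\leftrightarrow$ common zeros of the no-monodromy polynomials, i.e.\ monodromy-free opers). Your remark that reducedness is irrelevant at the level of closed points is also consistent with the paper's formulation ``without multiplicities.''
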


Moreover, generically (e.g. for all real values of the parameters) there are no multiplicities:

\begin{Corollary}\label{cor-simple-spectrum}
For generic $z_1,\dots,z_N\in\cc$ the subalgebra
$\A(\ul{z})\subset U(\gg)^{\otimes N}$ has simple
spectrum in $V_{\ul{\l}}^{sing}=(V_{\l_1}\otimes\dots\otimes V_{\l_N})^{sing}$. Hence
the joint eigenvectors for higher Gaudin Hamiltonians in
$V_{\ul{\l}}^{sing}$ are in one-to-one correspondence with monodromy-free
opers from $\Op_{^L
G}(\PP^1)_{\ul{z}}^{\ul{\l}}$. All real $z_1,\dots,z_N$ are generic in this sense.
\end{Corollary}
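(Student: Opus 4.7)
The plan is to combine Theorem~\ref{thm-main} with the Hermitian property of $\A(\ul{z})$ established in \cite{spectra} and then propagate by openness. Concretely, I would first observe the following general fact: if $\A$ is a commutative algebra acting on a finite-dimensional vector space $V$ by simultaneously diagonalizable operators, and if $V$ is cyclic as an $\A$-module, then the joint spectrum of $\A$ on $V$ is simple. Indeed, $V$ decomposes as a direct sum of joint eigenspaces $V_\chi$ for the various characters $\chi:\A\to\BC$ appearing; if some $V_\chi$ had dimension $\geqslant 2$, then no single vector $v\in V$ could generate $V$ under $\A$, since the $\A$-span of $v$ would intersect $V_\chi$ in at most a line. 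Simplicity of the joint spectrum is equivalent to simplicity of the spectrum of any generic element of $\A$.

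Next, by Theorem~\ref{thm-main}, the module $V_{\ul{\l}}^{sing}$ is cyclic over $\A(\ul{z})$ for every admissible value of $\ul{z}$, in particular for real $\ul{z}$. For real $z_1,\dots,z_N$ it is shown in \cite{spectra} that $\A(\ul{z})$ acts by Hermitian operators on $V_{\ul{\l}}$ with respect to the Shapovalov form (and hence on the invariant subspace $V_{\ul{\l}}^{sing}$), so the action is semisimple. Combining with the general observation above, this gives simple spectrum of $\A(\ul{z})$ on $V_{\ul{\l}}^{sing}$ for all real $\ul{z}$ with pairwise distinct entries.

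Simplicity of spectrum is then propagated to a generic (Zariski-open) set of parameters by a standard semicontinuity argument: the configuration space of pairwise distinct $N$-tuples $\ul{z}\in\BC^N$ is irreducible, the characteristic polynomial of any fixed generator of $\A(\ul{z})$ on $V_{\ul{\l}}^{sing}$ depends polynomially on $\ul{z}$, and the condition that its discriminant is nonzero is Zariski-open. Since this condition is satisfied at every real point, it holds on a Zariski-open set containing the entire real locus, hence generically. Finally, once simple spectrum is established, the bijection between joint eigenvectors (up to scalar) and monodromy-free opers from $\Op_{^L G}(\PP^1)_{\ul{z}}^{\ul{\l}}$ follows immediately from Corollary~\ref{cor-main}, since each character of $\A(\ul{z})$ now corresponds to a unique $1$-dimensional eigenspace.

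There is no serious obstacle in this corollary beyond Theorem~\ref{thm-main} itself; the only subtle point worth verifying is the cyclic + diagonalizable $\Rightarrow$ simple spectrum implication, which is purely linear-algebraic, and the fact that the semisimplicity result of \cite{spectra} restricts to $V_{\ul{\l}}^{sing}$, which holds because $\A(\ul{z})\subset(U(\gg)^{\otimes N})^{\diag\fg}$ preserves this subspace and the Shapovalov form remains positive definite (or at least nondegenerate Hermitian) on it.
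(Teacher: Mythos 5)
Your proposal is correct and follows essentially the same route as the paper's own proof: the Hermitian (hence diagonalizable) action for real $\ul{z}$ from \cite{spectra}, cyclicity of $V_{\ul{\l}}^{sing}$ from Theorem~\ref{thm-main}, the linear-algebra fact that cyclic plus diagonalizable implies simple spectrum, and Zariski-openness to pass from the real locus to generic parameters. The paper's argument is just a terser version of this, so your extra detail on the cyclic-implies-simple step and the semicontinuity argument only makes the same proof more explicit.
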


\begin{proof}
It is proved in \cite{spectra} that for real values of the parameters $\ul{z}$ and $\mu$, the algebra $\A_\mu(\ul{z})$ acts on $V_{\ul{\l}}$ by Hermitian operators hence diagonalizable. On the other hand, by the Main Theorem, $\A(\ul{z})$ has a cyclic vector in $V_{\ul{\l}}^{sing}$. Hence for real $\ul{z}$ the algebra $\A(\ul{z})$ acts with simple spectrum on $V_{\ul{\l}}^{sing}$. The latter is a Zariski open condition on the parameters hence the assertion.
\end{proof}

\subsection{Proof of the Main
Theorem.}\label{subsect-proof}

Let $e,f,h$ be a principal $\fsl_2$-triple in $\fg$ such that $h\in\fh$ and $e\in\fn_+$. Then $f$ is a regular element of $\fg$. Consider the subalgebra $\A_{f}(\ul{z})\subset U(\fg)^{\otimes N}$. The operator $\ad \diag(h)$ defines a grading on $U(\fg)^{\otimes N}=\bigoplus\limits_{k\in\BZ} U(\fg)^{\otimes N}_k$, where $U(\fg)^{\otimes N}_k:=\{x\in U(\fg)^{\otimes N}\ |\ \diag(h)x-x\diag(h) = 2kx\}$. This grading induces an increasing filtration on $U(\fg)^{\otimes N}$ such that $U(\fg)^{(k)}:=\bigoplus\limits_{i\le k} U(\fg)^k$, $k\in\BZ$. Note that $\A_{f}(\ul{z})\subset U(\fg)^{(0)}$, hence we get a \emph{bounded} increasing filtration on $\A_{f}(\ul{z})$. The associated graded $\gr \A_{f}(\ul{z})$ is naturally a commutative subalgebra in $\gr U(\fg)^{\otimes N} = U(\fg)^{\otimes N}$ and is clearly the same as the limit $\lim\limits_{s\to0}\A_{sf}(\ul{z})$. According to Proposition~\ref{prop-lim} the $0$-th graded component of this subalgebra is $\A(\ul{z})\subset U(\fg)^{\otimes N}$.

Consider the action of $\A_{f}(\ul{z})$ on $V_{\ul{\l}}$. By \cite{spectra}, $V_{\ul{\l}}$ is cyclic as a $\A_{f}(\ul{z})$-module. The operator $\diag(h)$ defines a grading on $V_{\ul{\l}}$ which agrees with the grading of $U(\fg)^{\otimes N}$ defined above and with the decomposition into $\fg$-isotypic components with respect to the diagonal $\fg$-action $V_{\ul{\l}}=\bigoplus_{\nu}I_\nu$ (here $I_\nu=m_\nu V_\nu$ are isotypic components with respect to diagonal $\fg$). Clearly we have $V_\l^{\fn_+}=\bigoplus\limits_{\nu}I_\nu^{top}$ (here the superscript $top$ means the top degree part with respect to the $\diag(h)$-grading). Now we want to deduce the cyclicity of $I_\nu^{top}$ with respect to $\A(\ul{z})$ from the cyclicity of $V_{\ul{\l}}$ with respect to $\A_{f}(\ul{z})$. The main difficulty is that the decomposition $V_{\ul{\l}}=\bigoplus_{\nu}I_\nu$ does not agree with the action of $\A_{f}(\ul{z})$. However, the following holds (and it is sufficient for our purposes):

\begin{Lemma}\label{lem-isotypic} There is a decomposition $V_{\ul{\l}}=\bigoplus_{\nu} J_\nu$ such that it is preserved by $\A_{f}(\ul{z})$ and $\gr J_\nu=I_\nu$.
\end{Lemma}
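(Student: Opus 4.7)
The natural choice is to set $J_\nu := I_\nu$ and verify the two required properties directly. For the associated-graded assertion, the filtration on $V_{\ul{\l}}$ comes from the $\diag(h)$-weight grading; since $h$ lies in the diagonal copy of $\fg$, each isotypic component $I_\nu$ is automatically $\diag(h)$-stable and hence itself a graded subspace of $V_{\ul{\l}}$. Consequently $\gr J_\nu = I_\nu$ as subspaces of $\gr V_{\ul{\l}} = V_{\ul{\l}}$. The substantive point is then to show that $\A_f(\ul{z})$ preserves each $I_\nu$.

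For this I would argue that $\diag(Z(\fg)) \subset \A_f(\ul{z})$. The key observation is that the diagonal image $\diag(Z(\fg)) \subset U(\fg)^{\otimes N}$ lies in the center of $U(\fg)^{\otimes N}$: for $z \in Z(\fg)$, the summand $z^{(i)}$ commutes with every $x^{(j)}$ (trivially when $j\neq i$, and by centrality of $Z(\fg)$ in $U(\fg)$ when $j=i$), so $\diag(Z(\fg)) \subset Z(\fg)^{\otimes N}$. In particular it commutes with $\A_f(\ul{z})$, and it is $\diag(\fg)$-invariant, hence $\diag(\fg_f)$-invariant. Thus the algebra generated by $\A_f(\ul{z})$ and $\diag(Z(\fg))$ is a commutative $\diag(\fg_f)$-invariant subalgebra of $U(\fg)^{\otimes N}$ containing $\A_f(\ul{z})$. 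Since $f$ is regular, $\A_f(\ul{z})$ is maximal with these properties (as recalled in Section~\ref{sect-prelim}), so the inclusion must be an equality and $\diag(Z(\fg)) \subset \A_f(\ul{z})$.

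By Harish--Chandra, distinct dominant weights $\nu$ have distinct central characters, so the joint eigenspaces of $\diag(Z(\fg))$ on the finite-dimensional module $V_{\ul{\l}}$ are precisely the isotypic components $I_\nu$. Since $\A_f(\ul{z})$ contains (and in particular commutes with) $\diag(Z(\fg))$, it preserves each such eigenspace. This proves the lemma with $J_\nu = I_\nu$.

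The only real obstacle is the inclusion $\diag(Z(\fg)) \subset \A_f(\ul{z})$. If one prefers to avoid invoking maximality, an alternative is to identify $\diag(Z(\fg))$ explicitly with certain generators: as $w \to \infty$ the expansion $(w-z_i)^{-k} \sim w^{-k}$ shows that the top coefficient $S_j^{\infty,d_j+1}(\ul{z};f)$ of $S_j(w;\ul{z};f)$ at infinity equals (up to symmetrization) $\Phi_j(\diag(g_a))$, i.e.\ the diagonal image of the Harish--Chandra generator of $Z(\fg)$ attached to $\Phi_j$. Taking $j = 1,\dots,\ell$ one recovers a set of generators of $\diag(Z(\fg))$ inside $\A_f(\ul{z})$, which suffices.
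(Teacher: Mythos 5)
Your proposal fails at its first step, and the failure is exactly the difficulty the lemma is designed to circumvent. The claim that $\diag(Z(\fg))$ is central in $U(\fg)^{\otimes N}$ is false: $\diag$ is the algebra homomorphism induced by the coproduct, so for a non-primitive element such as the Casimir $C=\sum_a x_a x_a$ one has $\diag(C)=\sum_a\bigl(\sum_i x_a^{(i)}\bigr)\bigl(\sum_k x_a^{(k)}\bigr)$, not $\sum_i C^{(i)}$; the cross terms $\sum_a x_a^{(i)}x_a^{(k)}$ do not commute with $y^{(k)}$ for general $y\in\fg$. In fact $\diag(Z(\fg))\not\subset\A_f(\ul{z})$: the inhomogeneous quadratic Hamiltonian $H_i\in\A_f(\ul{z})$ contains the linear term $f^{(i)}$, and $[\diag(C),f^{(i)}]=2\sum_a\sum_{j\neq i}x_a^{(j)}[x_a,f]^{(i)}\neq0$ because $\ad f$ is not zero, so $\diag(C)$ cannot lie in the commutative algebra $\A_f(\ul{z})$. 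Your fallback via the coefficients at infinity fails for the same reason: a direct computation gives $S_1^{\infty,2}(\ul{z};f)=\diag(C)+2\sum_i z_i f^{(i)}$, i.e.\ the diagonal Casimir appears only up to a correction of negative $\ad\diag(h)$-degree, not on the nose. Consequently the conclusion $J_\nu=I_\nu$ is also wrong in general: $\A_f(\ul{z})$ is only invariant under the centralizer of $f$, not under all of $\diag(\fg)$, and it genuinely mixes the isotypic components $I_\nu$ --- the paper states this explicitly just before the lemma.

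The paper's proof turns precisely this failure into the argument: one picks a single generic $C\in Z$ whose eigenspaces are the $I_\nu$, observes that $Z\subset\A(\ul{z})\subset\gr\A_f(\ul{z})$ so that some $\tilde C=C+N\in\A_f(\ul{z})$ with $\deg N<0$, and then takes the semisimple part $\pi_{\ul{\l}}(\tilde C)_s$ of the Jordan decomposition of $\pi_{\ul{\l}}(\tilde C)$. Being a polynomial in $\pi_{\ul{\l}}(\tilde C)$, this semisimple part lies in $\pi_{\ul{\l}}(\A_f(\ul{z}))$, and it agrees with $\pi_{\ul{\l}}(C)$ modulo lower degree; its eigenspaces are the desired $J_\nu$, which are deformations of the $I_\nu$ rather than equal to them. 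If you want to repair your write-up, you must replace the claim $\diag(Z(\fg))\subset\A_f(\ul{z})$ by this lifting-plus-Jordan-decomposition step; there is no way to take $J_\nu=I_\nu$.
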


\begin{proof} The isotypic components $I_\nu$ are the joint eigenspaces for the center $Z$ of the diagonal $U(\fg)$. Since the direct sum is finite $V_{\ul{\l}}=\bigoplus_{\nu} I_\nu$, it is in fact the decomposition into the eigenspaces for a single (sufficiently generic) element $C\in Z$. Since $Z\subset \A(\ul{z})\subset\gr\A_{f}(\ul{z})$, for any $C\in Z$ there exists $\tilde{C}\in\A_{f}(\ul{z})$ such that $\tilde{C}=C+N$ where $\deg N <0$. Denote by $\pi_{\ul{\l}}$ the homomorphism $U(\fg)^{\otimes N}\to\End(V_{\ul{\l}})$ and consider $\pi_{\ul{\l}}(\tilde{C})$, the image of $\tilde{C}$ in $\End(V_{\ul{\l}})$. We have the Jordan decomposition $\pi_{\ul{\l}}(\tilde{C})=\pi_{\ul{\l}}(\tilde{C})_s+\pi_{\ul{\l}}(\tilde{C})_n$ where both $\pi_{\ul{\l}}(\tilde{C})_s$ and $\pi_{\ul{\l}}(\tilde{C})_n$ are polynomials of $\pi_{\ul{\l}}(\tilde{C})$, hence lie in $\pi_{\ul{\l}}(\A_{f}(\ul{z}))$. Since $\pi_{\ul{\l}}(C)$ is a semisimple operator and $\pi_\l(\tilde{C})_n$ is a nilpotent operator expressed as a polynomial of $\pi_{\ul{\l}}(C+N)$, we have $\deg \pi_{\ul{\l}}(\tilde{C})_n<0$. Hence $\pi_{\ul{\l}}(\tilde{C})_s = \pi_{\ul{\l}}(C)$ modulo lower degree. Hence the projectors to the eigenspaces of $\pi_{\ul{\l}}(\tilde{C})_s$ are the the projectors to the eigenspaces of $C$ modulo lower degree. Thus the desired decomposition of $V_{\ul{\l}}$ is the decomposition into the eigenspaces with respect to $\pi_{\ul{\l}}(\tilde{C})_s$ for some generic $C\in Z$.
\end{proof}

By \cite{spectra}, $V_{\ul{\l}}$ is cyclic as a $\A_{f}(\ul{z})$-module. Hence each $J_\nu$ is cyclic with respect to $\A_{f}(\ul{z})$. In particular, the quotient $J_\nu^{(top)}/J_\nu^{(top-1)}$ is cyclic with respect to
$\A_{f}(\ul{z})/\A_{f}(\ul{z})_{(-1)}=\A(\ul{z})$. On the other hand, according to Lemma~\ref{lem-isotypic} $J_\nu^{(top)}/J_\nu^{(top-1)}$ is isomorphic to $I_\nu^{top}=m_\nu V_\nu^{\fn_+}$ as $\A(\ul{z})$-module. Hence the space of highest vectors of each $\diag(\fg)$-isotypic component in $V_{\ul{\l}}$ is cyclic as $\A(\ul{z})$-module, and hence the whole space $V_{\ul{\l}}^{sing}$ is a cyclic $\A(\ul{z})$-module. The first assertion of the Theorem is proved.

To prove the second assertion of the Theorem, we note that the filtration on the coordinate ring $\cc[\Op_{^L G}(\PP^1)_{\ul{z};\pi(-f)}]$ defined by the isomorphism with $\A_{f}(\ul{z})$ has, according to Proposition~\ref{multi-point-isomorphism}, the following meaning in terms of opers: the degrees of the generators are $\deg u_{j,n}=-d_j+n$, $\deg v_{j,n}^{(i)}=0$. Hence the $0$-th component of $\gr\cc[\Op_{^L G}(\PP^1)_{\ul{z};\pi(-f)}]$ with respect to this filtration is $\cc[\Op_{^L G}(\PP^1)_{\ul{z}}]$. The same holds for the quotients by the ``no-monodromy'' relations: the $0$-th component of $\gr\cc[\Op_{^L G}(\PP^1)_{\ul{z};\pi(-f)}^{\ul{\l}}]/(P_{z_k;\alpha}^{\ul{z};f;\ul{\l}})$ is $\cc[\Op_{^L G}(\PP^1)_{\ul{z}}^{\ul{\l}}]/(P_{z_k;\alpha}^{\ul{z};0;\ul{\l}})$. The natural homomorphism from $\cc[\Op_{^L G}(\PP^1)_{\ul{z}}^{\ul{\l}}]/(P_{z_k;\alpha}^{\ul{z};0;\ul{\l}})$ to the image of $\A(\ul{z})$ in $\End(V_{\ul{\l}}^{sing})$ is surjective by definition. For the second assertion of the Theorem, it suffices to show the injectivity of this homomorphism.

\begin{Lemma}\label{lem-isotypic2} Let $L\subset J_\nu$ be any lifting of $J_\nu^{(top)}/J_\nu^{(top-1)}$ to $J_\nu$. Then we have $J_\nu=L+\A_{f}(\ul{z})_{(-1)}L$.
\end{Lemma}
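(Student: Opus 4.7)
The plan is to prove the non-trivial inclusion $J_\nu\subset L+\A_f(\ul{z})_{(-1)}L$ (the reverse inclusion is immediate because $\A_f(\ul{z})$ preserves $J_\nu$) by ascending induction on the filtration level $k$ applied to $J_\nu^{(k)}$. The crucial input is a ``cyclicity at the top'' statement for the associated graded module: $\gr J_\nu$ is generated over $\gr\A_f(\ul{z})$ by its top-degree piece $I_\nu^{top}=m_\nu V_\nu^{\fn_+}$.

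First I would establish this cyclicity. By Proposition~\ref{prop-lim} the associated graded coincides with the limit, so $\gr\A_f(\ul{z})=\diag(\A_f)\cdot\A(\ul{z})$ inside $U(\fg)^{\otimes N}$. Under the diagonal $\fg$-action, $I_\nu\simeq\C^{m_\nu}\otimes V_\nu$, and $\diag(\A_f)$ acts only on the $V_\nu$-factor. Applying Theorem~\ref{theorem-spectra} in the case $N=1$ to the regular element $\mu=f$ yields that $V_\nu$ is cyclic as an $\A_f$-module. Since $\A_f$ lies in nonpositive degree with respect to the $\ad h$-filtration, its elements can only preserve or lower $h$-weights on $V_\nu$; hence any cyclic generator must belong to the one-dimensional highest weight space $V_\nu^{\fn_+}$, so $\A_f\cdot V_\nu^{\fn_+}=V_\nu$. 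Consequently $\diag(\A_f)\cdot I_\nu^{top}=\C^{m_\nu}\otimes V_\nu=I_\nu$, proving the desired generation $\gr\A_f(\ul{z})\cdot I_\nu^{top}=\gr J_\nu$.

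With this in hand, the induction proceeds as follows. The base case is $J_\nu^{(k)}=0$ for sufficiently small $k$. For $v\in J_\nu^{(k)}$ with $k<top$, use the generation statement to write $\bar v\in(\gr J_\nu)_k$ as a finite sum $\sum_i \bar a_i\bar m_i$ with $\bar a_i\in(\gr\A_f(\ul{z}))_{k-top}$ and $\bar m_i\in I_\nu^{top}$; since $k-top\leq-1$, each $\bar a_i$ lifts to some $\tilde a_i\in\A_f(\ul{z})_{(-1)}$, and each $\bar m_i$ lifts to some $\ell_i\in L$ via the isomorphism $L\simeq I_\nu^{top}$. Then $\sum_i\tilde a_i\ell_i\in\A_f(\ul{z})_{(-1)}L$ agrees with $v$ modulo $J_\nu^{(k-1)}$, so the inductive hypothesis places the remainder in $L+\A_f(\ul{z})_{(-1)}L$. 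For $k=top$ one subtracts instead the unique $\ell\in L$ mapping to the class of $v$ in $J_\nu/J_\nu^{(top-1)}$, reducing to the already-covered case of $J_\nu^{(top-1)}$.

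The main obstacle is the cyclicity of $\gr J_\nu$ over $\gr\A_f(\ul{z})$; once that is in place, the rest is routine bookkeeping of lifts across the filtration. What makes the cyclicity work is the combination of Proposition~\ref{prop-lim}'s identification of the associated graded with $\diag(\A_f)\cdot\A(\ul{z})$, the cyclicity of $V_\nu$ over the shift-of-argument algebra $\A_f$ from Theorem~\ref{theorem-spectra}, and the weight-constraint argument forcing the cyclic generator of $V_\nu$ to lie in its highest weight line.
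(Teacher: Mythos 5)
Your overall strategy is the same as the paper's: reduce the filtered statement to the graded statement that $I_\nu$ is generated from $I_\nu^{top}=m_\nu V_\nu^{\fn_+}$ by the negatively graded part of $\gr\A_{f}(\ul{z})$, identify $\diag(\A_f)$ inside $\gr\A_f(\ul{z})$ via Proposition~\ref{prop-lim}, and then lift through the filtration (your explicit induction is the ``it is sufficient to check'' reduction of the paper). The gap is in your derivation of the key input $\A_f\cdot V_\nu^{\fn_+}=V_\nu$. You argue: $V_\nu$ is cyclic over $\A_f$ (Theorem~\ref{theorem-spectra} with $N=1$), $\A_f$ acts by operators of nonpositive $\ad h$-degree, ``hence any cyclic generator must belong to the highest weight space, so $\A_f\cdot V_\nu^{\fn_+}=V_\nu$.'' Neither step of this inference is valid. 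A cyclic generator need not be homogeneous (if $v$ is cyclic, then so is $v+w$ for suitable lower-degree $w$), and, more seriously, cyclicity together with nonpositivity of degrees does not imply that the top-degree line generates. Toy counterexample: $V=\C e_1\oplus\C e_0$ graded in degrees $1,0$, and $A=\C\{1,P\}$ with $P$ the projection onto $\C e_0$; then $A$ is commutative, acts in degrees $\leqslant 0$, and $V=A(e_1+e_0)$ is cyclic, yet $Ae_1=\C e_1\neq V$. What this example violates, and what your argument never invokes, is the structural fact used by the paper that the degree-zero components of elements of $\A_f$ are \emph{central} (the generators of $\A_f$ are either central or of strictly negative degree), hence act by scalars on the irreducible $V_\nu$.

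The conclusion you need is nevertheless true, and the paper obtains it by quoting \cite{spectra} directly for the statement that each $V_\nu$ is generated by $\A_f$ from its highest weight vector --- a strictly stronger statement than the bare existence of a cyclic vector that you import from Theorem~\ref{theorem-spectra}. If you wish to avoid the citation, your argument can be repaired, but only by using the centrality just mentioned: write a cyclic vector as $v=v_{top}+v_{<top}$ and choose $b\in\A_f$ with $bv=v_{<top}$; comparing top-degree components shows that the (scalar) degree-zero part of $b$ vanishes, so $b$ acts on $V_\nu$ by a strictly degree-lowering, hence nilpotent, operator; therefore $1-b$ is invertible in the image of $\A_f$ in $\End(V_\nu)$ and $v_{top}=(1-b)v$ is again a cyclic generator, giving $\A_f\cdot V_\nu^{\fn_+}=V_\nu$. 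Without this extra input (or the direct appeal to \cite{spectra}), the central step of your proof is unjustified; the surrounding bookkeeping with liftings and the induction on the filtration level is fine.
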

\begin{proof} It is sufficient to check that $I_\nu=I_\nu^{top}+\gr\A_{f}(\ul{z})_{(-1)}I_\nu^{top}$, i.e. that the isotypic component of $V_\nu$ is generated from its highest weight space by the negatively graded part of $\gr\A_{f}(\ul{z})$. We have $\gr\A_{f}(\ul{z})\subset\A_{0}(\ul{z})\cdot\diag(\A_{f})$, where $\A_{0}(\ul{z})$ has degree $0$ and the generators of $\A_{f}$ are either central or have negative degree, and by the results of \cite{spectra} each $V_\nu$ is generated by $\A_{f}$ from its highest vector.
\end{proof}

By Lemma~\ref{lem-isotypic2}, we have $\dim \pi_{\ul{\l}}(\A_f(\ul{z}))/\pi_{\ul{\l}}(\A_f(\ul{z})_{-1})\le\dim V_{\ul{\l}}^{sing}$. On the other hand, we have $\pi_{\ul{\l}}(\A_f(\ul{z}))=\cc[\Op_{^L G}(\PP^1)_{\ul{z};\pi(-f)}^{\ul{\l}}]/(P_{z_k;\alpha}^{\ul{z};f;\ul{\l}})$ by \cite{spectra}. Hence $\dim \cc[\Op_{^L G}(\PP^1)_{\ul{z}}^{\ul{\l}}]/(P_{z_k;\alpha}^{\ul{z};0;\ul{\l}})=\dim(\cc[\Op_{^L G}(\PP^1)_{\ul{z}}^{\ul{\l}}]/(P_{z_k;\alpha}^{\ul{z};f;\ul{\l}})+\cc[\Op_{^L G}(\PP^1)_{\ul{z}}^{\ul{\l}}]_{-1})\le\dim V_{\ul{\l}}^{sing}$. On the other hand, by the first assertion of the Theorem, the dimension of the image of $\A(\ul{z})$ in $\End(V_{\ul{\l}}^{sing})$ is equal to $\dim V_{\ul{\l}}^{sing}$ and by the general results of \cite{FG} the homomorphism $\cc[\Op_{^L G}(\PP^1)_{\ul{z}}]=\A(\ul{z})\to\pi_{\ul{\l}}(\A(\ul{z}))$ factors through $\cc[\Op_{^L G}(\PP^1)_{\ul{z}}^{\ul{\l}}]/(P_{z_k;\alpha}^{\ul{z};0;\ul{\l}})$. Thus $\cc[\Op_{^L G}(\PP^1)_{\ul{z}}^{\ul{\l}}]/(P_{z_k;\alpha}^{\ul{z};0;\ul{\l}})$ has the same dimension as the image of $\A(\ul{z})$ in $\End(V_{\ul{\l}}^{sing})$ and we are done.
$\qquad\Box$

\begin{Remark} The same argument proves the cyclicity property for $\A_\mu(\ul{z})$ with any semisimple $\mu$. Namely, one can show that there is a cyclic vector for $\A_\mu(\ul{z})$ in the space of highest vectors with respect to the (reductive) Lie algebra $\mathfrak{z}_\fg(\mu)$, the centralizer of $\mu$ in $\fg$. The only difference is that one has to take a principal nilpotent element $f\in\mathfrak{z}_\fg(\mu)$ (then $\A_\mu+f(\ul{z})$ is an inhomogeneous Gaudin algebra with regular parameter) and consider the grading coming from a principal $\fsl_2$-triple in $\mathfrak{z}_\fg(\mu)$, not in $\fg$. We will discuss this in detail in the forthcoming joint paper with Joel Kamnitzer.

On the other hand, we do not know an oper description of the spectrum in this generality.
\end{Remark}

\section{Limit Gaudin subalgebras.}\label{sect-cactus}

According to Proposition~\ref{generators2} the commutative subalgebras $\A(\ul{z})\subset U(\fg)^{\otimes N}$ form a \emph{flat} family parameterized by the configuration space $$M_{0,N+1}=\{z_1,\ldots,z_N\ |\ z_i\ne z_j\}/\{z\mapsto az+b\}$$ of stable rational curves with marked points. This means that for any positive integer $M$ the dimension of the intersection of $\A(\ul{z})$ with the $M$-th filtered component $\rm{PBW}^{(M)}U(\fg)^{\otimes N}$ with respect to the PBW filtration of the universal enveloping algebra $U(\fg)^{\otimes N}$ does not depend on $\ul{z}\in M_{0,N+1}$. Denote this dimension by $d(M,N)$. Then there is a regular map from $M_{0,N+1}$ to the product of the Grassmannians $\prod\limits_{M'\le M}{\rm Gr}(d(M',N),{\rm PBW}^{(M')}U(\fg)^{\otimes N})$ taking $\ul{z}\in M_{0,N+1}$ to $\prod\limits_{M'\le M}\A(\ul{z})\cap {\rm PBW}^{(M')}U(\fg)^{\otimes N}$. Let $Z_M$ be the closure of the image of this map. Then there are surjective restriction maps $r_{MM'}:Z_M\to Z_{M'}$ for any $M'<M$. The inverse limit $Z=\lim\limits_{\leftarrow} Z_M$ is well-defined as a pro-algebraic scheme. The restriction of the tautological vector bundle on the Grassmannian gives a sheaf $\A$ of commutative algebras on $Z$. In \cite{cactus}, we show that $Z$ is biregular to the Deligne-Mumford compactification $\overline{M_{0,N+1}}$ of the moduli space of stable rational curves with $N+1$ marked points. The limit Gaudin subalgebras (i.e. the fibers of $\A$ over boundary points of $\overline{M_{0,N+1}}$) are described in \cite{cactus} as follows.

For any partition of the set $\{1,2,\ldots,N\}=M_1\cup\ldots\cup M_k$, define the homomorphism $$D_{M_1,\ldots,M_k}:U(\fg)^{\otimes k}\hookrightarrow
U(\fg)^{\otimes n},$$ taking $x^{(i)}\in U(\fg)^{\otimes k}$, for $x\in\fg,\ i=1,\ldots,k$, to $\sum\limits_{j\in M_i}x^{(j)}$.

For any subset $M=\{j_1,\ldots,j_m\}\subset\{1,2,\ldots,N\}$, with $j_1<\ldots<j_m$, let $I_M:U(\fg)^{\otimes m}\hookrightarrow
U(\fg)^{\otimes n}$ be the embedding of the tensor product of the copies of $U(\fg)$ indexed by $M$, i.e. $I_M(x^{(i)}):=x^{(j_i)}\in U(\fg)^{\otimes N}$ for any $x\in\fg,\ i=1,\ldots,m$. Clearly, all these homomorphisms are $\fg$-equivariant and every element in the image of $D_{M_1,\ldots,M_k}$ commutes with every element of $I_{M_i}([U(\fg)^{\otimes m_i}]^\fg)$ for $i=1,\ldots,k$. This gives us the following ``substitution'' homomorphism defining an operad structure on the spaces $[U(\fg)^{\otimes n}]^\fg$
\begin{equation*}
\gamma_{k;M_1,\ldots,M_k}=D_{M_1,\ldots,M_k}\otimes\bigotimes\limits_{i=1}^k I_{M_i}:(U(\fg)^{\otimes k})^\fg\otimes\bigotimes\limits_{i=1}^k(U(\fg)^{\otimes m_i})^\fg\to (U(\fg)^{\otimes N})^\fg.
\end{equation*}

\begin{Proposition}\cite{cactus} All subalgebras corresponding to boundary points of $\overline{M_{0,N+1}}$ are images of products of some $\A(\ul{z})$ under compositions of the homomorphisms $\gamma$.
\end{Proposition}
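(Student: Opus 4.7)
The plan is to describe each limit subalgebra inductively, following the tree structure of the corresponding stable nodal curve, and to identify the limiting operation with a composition of the operadic substitution homomorphisms $\gamma$.

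First, I would parameterize boundary points of $\ol{M_{0,N+1}}$ by stable rooted trees whose leaves are labeled by $\{1,\ldots,N,\infty\}$, rooted at the vertex carrying the leaf $\infty$. Each internal vertex $v$ has several outgoing edges and the corresponding subtrees partition the leaves below $v$ into nonempty subsets $M_1^v,\ldots,M_{k_v}^v$. This combinatorial data is precisely the input for an iterated composition of the operadic substitutions $\gamma_{k_v;M_1^v,\ldots,M_{k_v}^v}$; so the goal is to match this combinatorial recipe with the actual limit subalgebra.

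Second, I would argue by induction on the depth of the stratum. The base case is the open stratum $M_{0,N+1}\subset\ol{M_{0,N+1}}$, where the single-vertex tree yields the trivial substitution and the subalgebra is $\A(\ul{z})$ itself. For the inductive step, consider a codimension-one boundary stratum, corresponding to a one-parameter family in which a subset $\{z_i\mid i\in M\}$, for some proper subset $M\subsetneq\{1,\ldots,N\}$, bubbles off onto a new component. Using the affine invariance from Proposition~\ref{generators2}(1), I can normalize so that these points approach the origin via $z_i\mapsto s z_i$ while the remaining points $z_j$, $j\notin M$, stay fixed away from $0$. Applying the $\mu=0$ specialization of Proposition~\ref{prop-lim} to this partial rescaling shows that the limit subalgebra equals
$$
D_{M,\{j_1\},\ldots,\{j_{|M^c|}\}}\bigl(\A(0,z_{j_1},\ldots,z_{j_{|M^c|}})\bigr)\cdot I_M\bigl(\A(z_i\mid i\in M)\bigr)
$$
inside $U(\fg)^{\otimes N}$, which is exactly the image under $\gamma_{|M^c|+1;M,\{j_1\},\ldots,\{j_{|M^c|}\}}$ of the tensor product of two smaller Gaudin algebras, one for each component of the nodal curve. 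Iterating this codimension-one collapse along a maximal chain of degenerations from the generic stratum down to the target boundary stratum produces a nested composition of $\gamma$-operations whose combinatorial shape matches the tree of the nodal curve, which is the required description.

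The main obstacle is path-independence: different orders of performing the collisions must yield the same limit subalgebra. This follows from the flatness of the family $\A$ over $\ol{M_{0,N+1}}$ established at the start of this section (so that the fiber is well-defined at every boundary point, independently of the approach direction) together with the associativity of the operadic substitution $\gamma$, which mirrors the associativity of grafting in the modular operad $\ol{M_{0,N+1}}$. Concretely, two different orders of iterated collisions can be realized by a single two-parameter family whose fiber over the origin is the common inverse limit, and the two computations exhibit this fiber as isomorphic images of the same tensor product of small Gaudin algebras via associated iterations of $\gamma$.
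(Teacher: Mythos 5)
The paper itself offers no proof of this proposition; it is imported from \cite{cactus}, so your reconstruction can only be measured against the argument there. Your architecture is the right one and is essentially the one used in \cite{cactus}: index boundary strata by stable trees rooted at $\infty$, compute a single codimension-one collision, and iterate down the tree. Your worry about path-independence is also resolved correctly, and in fact more cheaply than you suggest: once one grants (as the statement of the proposition implicitly does) the identification of the pro-scheme $Z$ with $\ol{M_{0,N+1}}$, the fiber of the tautological sheaf at a boundary point is a fixed point of a product of Grassmannians, hence equals the limit of the subalgebras along \emph{any} approaching path; no separate two-parameter argument is needed. (Note that what is established at the start of section~\ref{sect-cactus} is flatness over the open stratum $M_{0,N+1}$ only; well-definedness of the boundary fibers is part of the construction of $Z$, not of that flatness statement.)

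The one genuine gap is the justification of the codimension-one limit formula. Proposition~\ref{prop-lim} has no ``$\mu=0$ specialization'': it requires $\mu$ regular (at $\mu=0$ the factor $\diag(\A_\mu)$ collapses to the center of the diagonal $U(\fg)$ and the count of algebraically independent generators in its proof fails), and it collapses \emph{all} $N$ points to the origin rather than a proper subset $M$. The statement you actually need --- that under the partial rescaling $z_i\mapsto sz_i$ for $i\in M$ the limit of the homogeneous algebra $\A(\ul{z})$ is $I_M\bigl(\A(z_i\mid i\in M)\bigr)\cdot D_{M,\{j_1\},\ldots,\{j_{|M^c|}\}}\bigl(\A(0,z_{j_1},\ldots,z_{j_{|M^c|}})\bigr)$ --- is a separate result, namely the main limit computation of \cite{CFR}. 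Its proof does follow the same two-step pattern as Proposition~\ref{prop-lim}: one shows the limit contains both factors by examining the generating fields $S_j(w;\ul{z}(s))$ for $w$ rescaled into the cluster and for $w$ away from it, and then matches Poincar\'e series using Proposition~\ref{generators2}, the key point being that the two factors overlap exactly in the center of the diagonal copy of $U(\fg)$ over the tensor factors in $M$ (attached to the node), so the product is a tensor product over that center and has the right size. You would have to supply this lemma rather than cite Proposition~\ref{prop-lim}; with it in hand, the rest of your induction goes through.
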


This implies the following

\begin{Proposition}\label{prop-limit-subalg}
For any $\ul{z}\in \ol{M_{0,N+1}}$ the subalgebra
$\A(\ul{z})\subset U(\gg)^{\otimes N}$ has a cyclic vector in $V_{\ul{\l}}^{sing}=(V_{\l_1}\otimes\dots\otimes V_{\l_N})^{sing}$. For any $\ul{z}\in \ol{M_{0,n+1}(\rr)}$ the subalgebra
$\A(\ul{z})\subset U(\gg)^{\otimes N}$ has simple spectrum in $V_{\ul{\l}}^{sing}=(V_{\l_1}\otimes\dots\otimes V_{\l_N})^{sing}$.
\end{Proposition}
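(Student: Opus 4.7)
The plan is to use the operadic description provided by the preceding Proposition from \cite{cactus}: for any boundary $\ul{z}\in\ol{M_{0,N+1}}$, the subalgebra $\A(\ul{z})\subset U(\fg)^{\otimes N}$ is the product, inside $U(\fg)^{\otimes N}$, of the images under the substitution maps $D_{M_1,\ldots,M_k}$ and $I_{M_i}$ of Gaudin subalgebras $\A(\ul{w})\subset U(\fg)^{\otimes k}$ and $\A(\ul{z}^{(i)})\subset U(\fg)^{\otimes m_i}$ attached to smooth parameters in smaller-arity moduli spaces. Theorem~\ref{thm-main} supplies cyclicity of each factor subalgebra on its own singular-vector space, and the task is to assemble these into a cyclic action of $\A(\ul{z})$ on $V_{\ul{\l}}^{sing}$.

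First I analyze the resulting action. Grouping tensor factors according to the partition, write $V_{\ul{\l}}\cong\bigotimes_{i=1}^k V_{M_i}$ with $V_{M_i}:=\bigotimes_{j\in M_i}V_{\l_j}$, and decompose each $V_{M_i}\cong\bigoplus_{\nu_i}V_{\nu_i}\otimes\Hom_\fg(V_{\nu_i},V_{M_i})$ into isotypic components for the diagonal action of $\fg$ on $V_{M_i}$. Taking singular vectors for the total diagonal $\fg$-action gives
\[
V_{\ul{\l}}^{sing}\cong\bigoplus_{\ul{\nu}}(V_{\nu_1}\otimes\cdots\otimes V_{\nu_k})^{sing}\otimes\bigotimes_{i=1}^k\Hom_\fg(V_{\nu_i},V_{M_i}).
\]
On each $\ul{\nu}$-summand, the subalgebra $I_{M_i}(\A(\ul{z}^{(i)}))$ acts trivially on the first factor (by $\fg$-invariance) and by its restriction to the multiplicity slot $\Hom_\fg(V_{\nu_i},V_{M_i})$ on the $i$-th multiplicity factor, while $D_{M_1,\ldots,M_k}(\A(\ul{w}))$ acts as $\A(\ul{w})|_{(V_{\nu_1}\otimes\cdots\otimes V_{\nu_k})^{sing}}$ on the first factor and trivially on the multiplicity factors. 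Each of these restricted actions is cyclic: the cyclic vector of $\A(\ul{z}^{(i)})$ on $V_{M_i}^{sing}$ provided by Theorem~\ref{thm-main} projects to a cyclic vector on each multiplicity space, because $\A(\ul{z}^{(i)})$ preserves the isotypic decomposition; and similarly $\A(\ul{w})$ is cyclic on each $(V_{\nu_1}\otimes\cdots\otimes V_{\nu_k})^{sing}$. Since a tensor product of cyclic vectors for commuting cyclic actions on tensor factors is cyclic for the tensor product action, each $\ul{\nu}$-summand carries a cyclic vector $u_{\ul{\nu}}$ for $\A(\ul{z})$.

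The different $\ul{\nu}$-summands are separated by the ``top-pole'' generators $S_j^{i,d_j+1}\in\A(\ul{w})$, which are central in the $i$-th tensor factor $U(\fg)^{(i)}\subset U(\fg)^{\otimes k}$ by Proposition~\ref{generators2}(4) and push forward under $D_{M_1,\ldots,M_k}$ to Casimirs of the diagonal $\fg$-action on $V_{M_i}$; these act by distinct scalars on distinct $\nu_i$-isotypes, so collectively they separate the $\ul{\nu}$-labels. Consequently $u:=\sum_{\ul{\nu}}u_{\ul{\nu}}$ is a cyclic vector for $\A(\ul{z})$ on $V_{\ul{\l}}^{sing}$. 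For the simple-spectrum assertion at $\ul{z}\in\ol{M_{0,N+1}(\rr)}$, the operadic factors have real parameters, so by \cite{spectra} each factor Gaudin subalgebra acts by Hermitian operators with respect to the natural positive-definite contravariant form and hence diagonalizably; Hermiticity is preserved by both $D_{M_1,\ldots,M_k}$ and $I_{M_i}$ (stable under tensor products and diagonal embeddings of Hermitian algebras), so $\A(\ul{z})$ acts diagonalizably on $V_{\ul{\l}}^{sing}$, and diagonalizability together with cyclicity forces every joint eigenspace to be one-dimensional.

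The main obstacle I expect is verifying the separation of the $\ul{\nu}$-components, that is, confirming that enough central-in-each-factor generators of $\A(\ul{w})$ survive the pushforward along $D_{M_1,\ldots,M_k}$ to resolve the inner isotypic decompositions of each $V_{M_i}$; this is the linchpin that lets cyclicity be assembled summand by summand from the interior statements, after which the simple-spectrum conclusion is formal.
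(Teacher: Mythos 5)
Your proposal is correct and follows essentially the same route as the paper: decompose $V_{\ul{\l}}$ into isotypic components for $D_{M_1,\ldots,M_k}(\fg^{\oplus k})$, obtain cyclicity on each summand from cyclicity of the multiplicity spaces and of $V_{\ul{\nu}}^{sing}$, and use Hermiticity for real parameters; your separation-by-central-elements step is a correct filling-in of a point the paper leaves implicit. The only adjustment needed is to run the argument as an induction on $N$ (as the paper does), since a boundary point may arise from an iterated composition of the maps $\gamma$, so the inner factors $\A(\ul{u_i})$ need not have smooth parameters and Theorem~\ref{thm-main} must there be replaced by the induction hypothesis.
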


\begin{proof} The proof repeats that of Theorem~3.18 from \cite{cactus}. We reproduce it here for completeness.

To prove the first assertion, we proceed by induction on $N$. Suppose that $\ul{z}=\gamma_{k;M_1,\ldots,M_k}(\ul{w};\ul{u_1},\ldots,\ul{u_k})$. Then the corresponding subalgebra $\A(\ul{z})$ is generated by $I_{M_i}(\A(\ul{u_i}))$ and $D_{M_1,\ldots,M_k}(\A(\ul{w}))$. Let $V_{\ul{\l}}=\bigoplus W_{\ul{\nu}}\otimes V_{\ul{\nu}}$ be the decomposition of $V_{\ul{\l}}$ into the sum of isotypic component with respect to $D_{M_1,\ldots,M_k}(\fg^{\oplus k})$ with $V_{\ul{\nu}}$ being the irreducible representation of $\fg^{\oplus k}$ with the highest weight $\ul{\nu}=(\nu_1,\ldots,\nu_k)$ and $W_{\ul{\nu}}:=\Hom_{\fg^{\oplus k}}(V_{\ul{\nu}},V_{\ul{\l}})$ being the multiplicity space. By induction hypothesis, the multiplicity spaces $W_{\ul{\nu}}$ are cyclic $\bigotimes\limits_{i=1}^k I_{M_i}(\A(\ul{u_i}))$-modules. On the other hand, the space of singular vectors of each $V_{\ul{\nu}}$ is a cyclic $D_{M_1,\ldots,M_k}(\A(\ul{w}))$-module. Hence the entire module $V_{\ul{\l}}^{sing}$ is cyclic with respect to $\A(\ul{z})$.

For real $\ul{z}$ the generators of $\A(\ul{z})$ act by Hermitian operators in any $V_{\ul{\l}}$, and hence are diagonalizable. Since there is a cyclic vector for the action of $\A(\ul{z})$ in the space $V_{\ul{\l}}^{sing}$ the joint eigenvalues of the generators on different eigenvectors are different.
\end{proof}

\begin{Corollary}\label{cor-covering} For any collection $\ul{\l}$ of dominant integral weights, the spectra of the algebras $\A(\ul{z})$ with \emph{real} $\ul{z}$ in the space $V_{\ul{\l}}^{sing}$ form an unbranched covering of $\overline{M_{0,N+1}(\BR)}$.
\end{Corollary}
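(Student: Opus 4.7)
The plan is to realize the covering as the real locus of the relative spectrum of the image sheaf $\pi_{\ul{\l}}(\A)$. Here $\pi_{\ul{\l}}:U(\fg)^{\otimes N}\to\End(V_{\ul{\l}}^{sing})$ is the action map, and $\A$ is the sheaf of commutative subalgebras on $\ol{M_{0,N+1}}$ constructed in \cite{cactus}. Setting $X:=\Spec_{\ol{M_{0,N+1}}}\pi_{\ul{\l}}(\A)$, the set-theoretic fiber of $X$ over $\ul{z}$ is by construction the joint spectrum of $\A(\ul{z})$ acting on $V_{\ul{\l}}^{sing}$, so the covering we want is $X(\BR)\to\ol{M_{0,N+1}(\BR)}$.

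The key input is Proposition~\ref{prop-limit-subalg}: for every $\ul{z}\in\ol{M_{0,N+1}(\BR)}$ the algebra $\A(\ul{z})$ acts with simple spectrum on $V_{\ul{\l}}^{sing}$. Simple spectrum is equivalent to the combined statements that $\pi_{\ul{\l}}(\A(\ul{z}))\subset\End(V_{\ul{\l}}^{sing})$ is semisimple (hence reduced, a product of copies of $\cc$) and of dimension exactly $\dim V_{\ul{\l}}^{sing}$. Consequently the fiber of $X$ over every real point consists of exactly $\dim V_{\ul{\l}}^{sing}$ distinct reduced $\cc$-points, so the length of the fibers is constant along the real locus.

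Finally, I would conclude by the standard fact that a finite morphism of real-analytic spaces whose geometric fibers are all reduced of the same cardinality is étale, hence induces an unbranched topological covering. Concretely, choosing local generators $a_1,\dots,a_r$ of $\pi_{\ul{\l}}(\A)$, the simultaneous eigenvalues at a fixed $\ul{z}_0$ form $\dim V_{\ul{\l}}^{sing}$ distinct tuples in $\cc^r$; since the $a_i$ depend real-analytically on $\ul{z}$, the implicit function theorem applied to their characteristic polynomials yields a local real-analytic labelling of the joint eigenvalues in a neighbourhood of $\ul{z}_0$, providing explicit trivialisations of the covering. The only step requiring mild care is that the image sheaf $\pi_{\ul{\l}}(\A)$ is flat of rank $\dim V_{\ul{\l}}^{sing}$ over $\ol{M_{0,N+1}(\BR)}$; this follows from the pointwise constancy of fiber length just established, together with the flatness of the family $\A$ on $\ol{M_{0,N+1}}$ built into its construction as a closure inside a product of Grassmannians in \cite{cactus}.
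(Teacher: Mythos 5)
Your argument is correct and is essentially the one the paper intends: the corollary is stated without a separate proof precisely because it follows immediately from Proposition~\ref{prop-limit-subalg} (simple spectrum at \emph{every} real point of $\overline{M_{0,N+1}}$ gives constant fiber cardinality $\dim V_{\ul{\l}}^{sing}$, and continuity of the family of subalgebras gives local triviality). One cosmetic point: the implicit function theorem should be applied to the characteristic polynomial of a single generic element of $\pi_{\ul{\l}}(\A(\ul{z}_0))$ that separates the joint eigenvalues, rather than to each generator $a_i$ separately, since an individual $a_i$ may have repeated eigenvalues even when the joint spectrum is simple.
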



\begin{thebibliography}{MTV08}

\bibitem{BD} A. Beilinson and V. Drinfeld, {\em Quantization of
Hitchin's integrable system and Hecke eigen-sheaves}, Preprint,
available at www.ma.utexas.edu/$\sim$benzvi/BD.

\bibitem{CFR} Chervov, Alexander; Falqui, Gregorio; Rybnikov, Leonid. \emph{Limits of Gaudin algebras, quantization of bending flows, Jucys-Murphy elements and Gelfand-Tsetlin bases.} Lett. Math. Phys. 91 (2010), no. 2, 129--150.

\bibitem{FF} B. Feigin and E. Frenkel, {\em Affine Kac--Moody
algebras at the critical level and Gelfand--Dikii algebras}, Int.
Jour. Mod. Phys. {\bf A7}, Supplement 1A (1992) 197--215.

\bibitem{FFR} B. Feigin, E. Frenkel, and N. Reshetikhin, {\em
Gaudin model, Bethe Ansatz and critical level}, Comm. Math. Phys. {\bf
166} (1994) 27--62.

\bibitem{spectra} B. Feigin, E. Frenkel, and L. Rybnikov, {\em
Opers with irregular singularity and spectra of the shift of argument subalgebra},Duke Math. J. {\bf 155} (2010), no. 2, 337--363.

\bibitem{FFTL} B. Feigin, E. Frenkel, and V. Toledano Laredo, {\em
Gaudin model with irregular singularities}, Preprint math.QA/0612798,
  to appear in Advances in Math.

\bibitem{Fr1} E. Frenkel, {\em Affine Algebras, Langlands Duality
and Bethe Ansatz}, in XIth International Congress of Mathematical
Physics (Paris, 1994), pp. 606--642, Internat. Press, Cambridge, MA,
1995. q-alg/9506003

\bibitem{Fr2} E. Frenkel, {\em Lectures on Wakimoto modules, opers
and the center at the critical level}, Adv. Math. {\bf 195} (2005)
  297--404 (math.QA/0210029).

\bibitem{F:faro} E. Frenkel, {\it Gaudin model and opers}, in
Infinite dimensional algebras and quantum integrable systems, Progress
in Math. {\bf 237}, pp. 1--58, Birkh\"auser, Basel, 2005
(math.QA/0407524).

\bibitem{F:book} E. Frenkel, {\em Langlands Correspondence for Loop
  Groups}, Cambridge Studies in Advanced Mathematics {\bf 103},
  Cambridge University Press, 2007.

\bibitem{FG} E. Frenkel and D. Gaitsgory, {\em Local geometric
Langlands correspondence and affine Kac--Moody algebras}, in
Algebraic Geometry and Number Theory, Progress in Math. {\bf 253},
pp. 69--260, Birkh\"auser Boston, 2006 (math.RT/0508382).

\bibitem{G1} M. Gaudin, {\em Diagonalisation d'une classe
d'hamiltoniens de spin}, J. de Physique, {\bf 10} (1976) 1087--1098.

\bibitem{G} M. Gaudin, \emph{La fonction d'onde de Bethe},
Collection du Commissariat a` l'E'nergie Atomique: Se'rie
Scientifique, Masson, Paris, 1983.

\bibitem{Ko} B. Kostant, {\em Lie group representations on
polynomial rings}, Amer. J. Math. {\bf 85} (1963) 327--402.

\bibitem{MTV07} E. Mukhin, V. Tarasov, and A. Varchenko, {\em Schubert
calculus and representations of general linear group},  J. Amer. Math. Soc. {\bf 22} (2009), no. 4, 909--940 (
math.QA/0711.4079).

\bibitem{MTV08} E. Mukhin, V. Tarasov, and A. Varchenko, {\em Spaces
of quasi-exponentials and representations of $\gg\ll_N$},
J. Phys. {\bf A 41} (2008), no. 19, 194017, 1--28 (	arXiv:0801.3120 ).

\bibitem{LMV} Kang Lu, E. Mukhin, and A. Varchenko, {\em On the Gaudin model associated to Lie algebras of classical types}, preprint arXiv:1512.08524.

\bibitem{Ryb1} L. Rybnikov, {\em Argument shift method
and Gaudin model}, Func. Anal. Appl. {\bf 40} (2006), No. 3,
translated from Funktsional'nyi Analiz i Ego Prilozheniya {\bf 40}
(2006), No.~3, pp.~30--43 (math.RT/0606380).

\bibitem{cactus} L. Rybnikov, \emph{Cactus group and monodromy of Bethe vectors.} Preprint arXiv:1409.0131.


\end{thebibliography}
\end{document}